\numberwithin{equation}{section}
\newcommand{\K}{{\mathbb K}}
\newtheorem{theo}{{\sc \bf Theorem}}[section]
\newtheorem{lem}[theo]{{\sc \bf Lemma}}
\newtheorem{prop}[theo]{{\sc \bf Proposition}}
\theoremstyle{definition}
\newtheorem{defin}{Definition}[section]
\begin{document}

\title{Unbounded Derivations in Bunce-Deddens-Toeplitz Algebras}

\author[Klimek]{Slawomir Klimek}
\address{Department of Mathematical Sciences,
Indiana University-Purdue University Indianapolis,
402 N. Blackford St., Indianapolis, IN 46202, U.S.A.}
\email{sklimek@math.iupui.edu}

\author[McBride]{Matt McBride}
\address{Department of Mathematics and Statistics,
Mississippi State University,
175 President's Cir., Mississippi State, MS 39762, U.S.A.}
\email{mmcbride@math.msstate.edu}

\author[Rathnayake]{Sumedha Rathnayake}
\address{Department of Mathematics,
University of Michigan,
530 Church St., Ann arbor, MI 48109, U.S.A.}
\email{sumedhar@umich.edu}

\author[Sakai]{Kaoru Sakai}
\address{Department of Mathematical Sciences,
ndiana University-Purdue University Indianapolis,
402 N. Blackford St., Indianapolis, IN 46202, U.S.A.}
\email{ksakai@iupui.edu }

\author[Wang]{Honglin Wang}
\address{Department of Mathematical Sciences,
Indiana University-Purdue University Indianapolis,
402 N. Blackford St., Indianapolis, IN 46202, U.S.A.}
\email{wanghong@imail.iu.edu}

\date{\today}

\begin{abstract}
In this paper we study decompositions and classification problems for unbounded derivations in Bunce-Deddens-Toeplitz and Bunce-Deddens algebras. We also look at implementations of these derivations on associated  GNS Hilbert spaces.  
\end{abstract}

\maketitle
\section{Introduction}

The study of derivations on C$^*$-algebras, which was started in 1953 by Kaplansky, had undergone several stages during its course:  theory of bounded derivations,  unbounded derivations and noncommutative vector-fields, according to \cite{B}. Originally motivated by research on dynamics in statistical mechanics, development of the theory of unbounded derivations in C$^*$-algebras began much later than its bounded counterpart; see \cite{Sa}. The focus was on closability, generator properties and classification of closed derivations. More recently, classification and generator properties of derivations which are well behaved with respect to the action of a locally compact group were some of the major concerns \cite{BEJ}. Additionally, derivations feature in the theory of noncommutative vector fields \cite{J}, which was inspired by Connes work on noncommutative geometry \cite{C}. 

In this paper we study classification and decompositions of unbounded derivations in Bunce-Deddens-Toeplitz and Bunce-Deddens algebras \cite{BD1}, \cite{BD2}. Given an increasing sequence $\{l_k\}_{k=0}^{\infty}$ of nonnegative integers such that $l_k$ divides $l_{k+1}$ for $k\geq 0$,  the Bunce-Deddens-Toeplitz algebra is defined as the C$^*$-algebra of operators on $\ell^2(\mathbb Z_{\geq 0})$ generated by all $l_k$-periodic weighted shifts for all $k \geq 0$. Different sequences $\{l_k\}$ may lead to the same algebras, with the classifying invariant being the supernatural number $N=\prod_{p-\textnormal{prime}} p^{\epsilon_p}$, where $\epsilon_p:=\sup\{j: \exists k\ p^j|l_k\}$. In this paper we adopt a slightly different definition of the Bunce-Deddens-Toeplitz algebra $A(N)$ associated with the supernatural number $N$ that uses $N$ more directly. We consider both finite and infinite $N$.

The algebra $\mathcal K$ of compact operators on $\ell^2(\mathbb Z_{\geq 0})$ is contained in $A(N)$ and the quotient $A(N)/ \mathcal K := B(N)$ is known as the Bunce-Deddens algebra. The structure of all those algebras is quite different depending on whether $N$ is finite or infinite. The main objects of study in this paper are densely defined derivations $d: \mathcal A(N) \rightarrow A(N)$ in the Bunce-Deddens-Toeplitz algebras, where $\mathcal A(N)$ is the subalgebra of polynomials of  $l_k$-periodic weighted shifts, as well as derivations $\delta: \mathcal B(N) \rightarrow B(N)$ in the Bunce-Deddens algebras, where $\mathcal B(N)$ is the image of $\mathcal A(N)$ under the quotient map $A(N)\to A(N)/ \mathcal K = B(N)$. 

Intriguingly, if $d: \mathcal A(N) \rightarrow A(N)$ is any derivation then $d$ preserves the ideal of compact operators $\mathcal K$, and consequently $[d]: \mathcal B(N) \rightarrow B(N)$ defined by $[d](a+ \mathcal K)= d(a)+ \mathcal K$ is a derivation in $B(N)$. It is a non-trivial problem to describe properties of the map $d\mapsto [d]$. In general, on any C$^*$-algebra, bounded derivations preserve closed ideals and so define derivations on quotients. It was proven in \cite{P} that for bounded derivations and separable C$^*$-algebras the above map is onto, i.e., derivations can be lifted from quotients in separable cases but not in general. We prove here that lifting unbounded derivations from Bunce-Deddens to Bunce-Deddens-Toeplitz algebras is always possible when $N$ is finite and conjecture that it is true for any supernatural number $N$.

The main results of this paper are that any derivation in Bunce-Deddens or Bunce-Deddens-Toeplitz algebras can be uniquely decomposed into a sum of a certain special derivation and an approximately inner derivation. The special derivations are not approximately inner, are explicitly described, and depend on whether $N$ is finite or infinite.

The algebra $A(N)$ has a natural $S^1$ action  given by scalar multiplication of the generators, see formula (\ref{rho_action}), which also quotients to $B(N)$. The key technique, like in \cite{BEJ}, is to use Fourier series decomposition with respect to this action. The Fourier components of a derivation $d$ satisfy a covariance property with respect to the $S^1$ action. It turns out that such $n$-covariant derivations can be completely classified and their properties explicitly analyzed. We then use Ces\`aro convergence of Fourier series to infer properties of $d$.

Additionally, we describe implementations of derivations in various GNS Hilbert spaces associated with the algebras. Some of those implementations can be used to construct spectral triples on  Bunce-Deddens-Toeplitz and Bunce-Deddens algebras, similarly to what was done in \cite{KMR1},\cite{KMR2}.

\section{Definitions, Notations and preliminary results.}
In this section we introduce notation and terminology used in the paper. 

\subsection{$\mathbb Z/N\mathbb Z$ rings}
A {\it supernatural number} $N$ is defined as the formal product: 
\[N= \prod_{p-\textnormal{prime}} p^{\epsilon_p}, \;\;\; \epsilon_p \in\{0,1, \cdots, \infty\}.\]
If $\sum \epsilon_p < \infty$ then $N$ is said to be a finite supernatural number (a regular natural number), otherwise it is said to be infinite.  If $N'= \prod_{p-\textnormal{prime}} p^{\epsilon_p'}$ is another supernatural number, then their product is given by:
\[NN'= \prod_{p-\textnormal{prime}} p^{\epsilon_p + \epsilon_p'}.\]
A supernatural number $N$ is said to divide $M$ if $M=NN'$ for some supernatural number $N'$, or equivalently, if $\epsilon_p(N) \leq \epsilon_p(M)$ for every prime $p$.

For the remainder of the paper we work with a fixed $N$. We let 
\[\mathcal J_N=\{j: \; j|N, j<\infty\}\]
be the set of finite divisors of $N$.
Notice that  $(\mathcal J_N, \leq)$ is a directed set where $j_1 \leq j_2$ if and only if $j_1 | j_2 |N$.

Consider the collection of  rings $\left\{\mathbb Z/ j\mathbb Z\right\}_{j \in \mathcal J_N}$ and the family of ring homomorphisms
\[\begin{aligned}\pi_{ij}: \mathbb Z/ j\mathbb Z&\rightarrow \mathbb Z/ i\mathbb Z, \;\;\;\; j\geq i\\
\pi_{ij}(x)&=x\ (\textrm{mod } i)\end{aligned}\]
satisfying 
\[\pi_{ik} = \pi_{ij} \circ \pi_{jk} \textnormal{ for all } i \leq j \leq k.\]
Then the inverse limit of the system can be denoted as:
\[\mathbb Z/N\mathbb Z:=\lim_{\underset{j\in \mathcal J_N}{\longleftarrow}} \mathbb Z/ j\mathbb Z
=\left\{\{x_j\}\in\prod\limits_{j\in \mathcal J_N}\mathbb Z/ j\mathbb Z : \pi_{ij}(x_j)=x_i\right\},\]
and let $\pi_j: \mathbb Z/N\mathbb Z \ni \{x_j\}\mapsto x_j \in \mathbb Z/ j\mathbb Z$ be the corresponding homomorphisms.
In particular, if $N$ is finite the above definition coincides with the usual meaning of the symbol $\mathbb Z/N\mathbb Z$, while if $N=p^\infty$ for a prime p, then the above limit is equal to $\mathbb Z_p$, the ring of $p$-adic integers, dee for example \cite{R}.
In general we have the following simple consequence of the Chinese Reminder Theorem.

\begin{prop}
If $N= \prod\limits_{\substack{p-\textnormal{prime} \\ {\epsilon_p \neq 0}}} p^{\epsilon_p}$, then $ \mathbb Z/N\mathbb Z \cong \prod\limits_{\substack{p-\textnormal{prime} \\ {\epsilon_p \neq 0}}}  \mathbb Z/ {p^{\epsilon_p}}\mathbb Z$. 
 \end{prop}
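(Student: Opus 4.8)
The plan is to build the isomorphism explicitly from the Chinese Remainder Theorem together with the universal property of the inverse limit, handling the finite and infinite exponents uniformly. Write $P=\{p : \epsilon_p\neq 0\}$ for the primes dividing $N$ and set $R=\prod_{p\in P}\mathbb Z/p^{\epsilon_p}\mathbb Z$, where for $\epsilon_p=\infty$ the factor $\mathbb Z/p^{\infty}\mathbb Z$ is read as the $p$-adic integers $\mathbb Z_p=\varprojlim_a \mathbb Z/p^a\mathbb Z$. Every finite divisor $j\in\mathcal J_N$ factors uniquely as $j=\prod_{p\in P}p^{a_p(j)}$, where $a_p(j)=v_p(j)\le\epsilon_p$ and $a_p(j)=0$ for all but finitely many $p$, and the Chinese Remainder Theorem supplies a ring isomorphism $\mathbb Z/j\mathbb Z\cong\prod_{p\in P}\mathbb Z/p^{a_p(j)}\mathbb Z$.

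First I would define $\psi\colon R\to\mathbb Z/N\mathbb Z$. By the universal property of the inverse limit it suffices to produce compatible ring homomorphisms $\psi_j\colon R\to\mathbb Z/j\mathbb Z$: given $x=(x_p)_{p\in P}\in R$, reduce each coordinate $x_p$ modulo $p^{a_p(j)}$ via the canonical projection $\mathbb Z/p^{\epsilon_p}\mathbb Z\to\mathbb Z/p^{a_p(j)}\mathbb Z$ (which is $p$-adic truncation in the infinite case) and then identify $\prod_{p}\mathbb Z/p^{a_p(j)}\mathbb Z$ with $\mathbb Z/j\mathbb Z$ by CRT. The required compatibility $\pi_{ij}\circ\psi_j=\psi_i$ for $i\le j$ reduces to the commutativity of the reduction maps, i.e.\ to naturality of the CRT isomorphism in $j$, which is routine.

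Conversely, I would recover the coordinates of an element $\{x_j\}\in\mathbb Z/N\mathbb Z$ from the subfamily indexed by prime powers. For each $p\in P$ the elements $x_{p^a}$ with $1\le a\le\epsilon_p$ form a compatible sequence under the reductions $\mathbb Z/p^{a+1}\mathbb Z\to\mathbb Z/p^a\mathbb Z$, hence determine an element $y_p\in\mathbb Z/p^{\epsilon_p}\mathbb Z$ (a genuine residue when $\epsilon_p<\infty$, a $p$-adic integer when $\epsilon_p=\infty$); set $\phi(\{x_j\})=(y_p)_{p\in P}$. To see that $\psi$ and $\phi$ are mutually inverse, note that for $j=\prod_p p^{a_p(j)}$ the inverse-limit condition gives $\pi_{p^{a_p(j)},\,j}(x_j)=x_{p^{a_p(j)}}$, so CRT reconstructs $x_j$ from the data $(y_p)$; in the other direction $\psi_{p^a}(x)=x_p\bmod p^a$ because $p^a$ involves only the prime $p$, and these truncations recover $x_p$. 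That both maps are ring homomorphisms is immediate, since each is assembled from reductions and the CRT ring isomorphism.

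The only real subtlety, and the step I would treat most carefully, is the passage to the limit for a fixed prime: the claim that the subsystem $\{p^a\}_{a\le\epsilon_p}$ carries all the information needed to extract the $p$-component, so that the inverse limit commutes with the product over $P$. This rests on the fact that every finite divisor is built multiplicatively from pairwise coprime prime powers, whence no data beyond the $x_{p^a}$ is needed. The same observation settles both cases at once: when $\epsilon_p<\infty$ the subsystem has a largest element $p^{\epsilon_p}$ and its limit is simply $\mathbb Z/p^{\epsilon_p}\mathbb Z$, while when $\epsilon_p=\infty$ it is exactly the defining inverse limit of $\mathbb Z_p$.
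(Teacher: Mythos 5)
Your proof is correct, and it takes exactly the route the paper intends: the paper states this proposition without proof, calling it a ``simple consequence of the Chinese Remainder Theorem,'' and your argument is precisely that CRT argument carried out in full, with the universal property of the inverse limit and the prime-power subsystems $\{p^a\}_{a\le\epsilon_p}$ supplying the details the paper omits. No gaps; the compatibility (naturality of CRT in $j$) and the mutual-inverse checks are the right points to verify, and you handle the finite and infinite exponent cases correctly.
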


When the ring $ \mathbb Z/N\mathbb Z $ is equipped with the Tychonoff topology it forms a compact, abelian topological group. Thus it has a unique normalized Haar measure $d_Hx$. Also, if $N$ is an infinite supernatural number then $ \mathbb Z/N\mathbb Z$ is a Cantor set \cite{W}. 

Let $q_j: \mathbb Z \rightarrow \mathbb Z/j\mathbb Z$ be the quotient maps and let $q: \mathbb Z \rightarrow \mathbb Z/N\mathbb Z$ be defined by:
\begin{equation}\label{q_def}
q(x)=\{x\ (\textrm{mod } i)\}.
\end{equation}
We have the following simple property:
$$\pi_j\circ q=q_j.$$
As a consequence of this and the structure of cylinder sets, we obtain the following observation, needed later in the description of Bunce-Deddens algebras.
\begin{prop}
The range of $q$ is dense in $ \mathbb Z/N\mathbb Z $. 
 \end{prop}

We denote by $\mathcal E( \mathbb Z/N\mathbb Z)$ the space of locally constant functions on $ \mathbb Z/N\mathbb Z$. This is a dense subspace of the space of continuous functions on $ \mathbb Z/N\mathbb Z$.  For $f \in \mathcal E( \mathbb Z/N\mathbb Z)$, consider the sequence: 
$$a_f(k)= f(q(k)),\ \ k\in\mathbb Z_{\geq 0}.$$ 
Then we have the following observation:

\begin{prop}\label{loc_const}
If $f \in \mathcal E( \mathbb Z/N\mathbb Z)$, then there exists $j \in \mathcal J_N$ such that $a_f(k+j)= a_f(k)$ for every $k\in \mathbb Z_{\geq 0}$. Conversely, if $a(k)$ is a $j$-periodic sequence for some $j \in \mathcal J_N$, then there is a unique $f \in  \mathcal E( \mathbb Z/N\mathbb Z)$ such that $a(k)=a_f(k)$.
\end{prop}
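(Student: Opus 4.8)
The plan is to reduce both directions to the single structural fact that a locally constant function on the profinite space $\mathbb Z/N\mathbb Z$ factors through one of the finite quotients $\mathbb Z/j\mathbb Z$. For the forward direction I would start from $f \in \mathcal E(\mathbb Z/N\mathbb Z)$ and first observe that, since $f$ is locally constant, each of its level sets is open; as these level sets partition the compact space $\mathbb Z/N\mathbb Z$, only finitely many are nonempty, so $f$ has finite range and every level set $U_i = f^{-1}(c_i)$ is clopen. Next I would cover each $U_i$ by basic cylinder sets of the form $\pi_{j}^{-1}(\{x\})$, which form a basis of the inverse-limit topology, and extract a finite subcover using compactness. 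Invoking the directedness of $(\mathcal J_N, \leq)$, I would then pass to a common upper bound $j \in \mathcal J_N$ for all the moduli appearing, so that every $U_i$ becomes a union of fibers of $\pi_j$. Consequently $f$ is constant on the fibers of $\pi_j$ and therefore factors as $f = g \circ \pi_j$ for some $g \colon \mathbb Z/j\mathbb Z \to \mathbb C$. The relation $\pi_j \circ q = q_j$ then yields $a_f(k) = g(q_j(k)) = g(k\ (\textrm{mod } j))$, which is manifestly $j$-periodic on $\mathbb Z_{\geq 0}$.

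For the converse, given a $j$-periodic sequence $a(k)$ with $j \in \mathcal J_N$, I would encode it as a function $g \colon \mathbb Z/j\mathbb Z \to \mathbb C$ via $a(k) = g(k\ (\textrm{mod } j))$ and set $f = g \circ \pi_j$. Continuity of $\pi_j$ together with the discreteness of the finite set $\mathbb Z/j\mathbb Z$ forces $f$ to be locally constant, so $f \in \mathcal E(\mathbb Z/N\mathbb Z)$, and the identity $\pi_j \circ q = q_j$ recovers $a_f(k) = a(k)$. For uniqueness I would note that any two such functions agree on $q(\mathbb Z_{\geq 0})$; since this set is dense in $\mathbb Z/N\mathbb Z$ (each cylinder $\pi_j^{-1}(\{x\})$ contains $q(k)$ for some nonnegative $k \equiv x \pmod j$) and the functions are continuous, they must coincide everywhere.

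The main obstacle is the factorization step in the forward direction, namely turning ``locally constant'' into ``factors through a single finite quotient.'' The delicate points are that one must first extract a finite clopen partition, which relies on compactness, and then synchronize the finitely many moduli arising from the cylinder-set cover into one $j \in \mathcal J_N$ using the directed-set structure; without this synchronization one only learns that $f$ factors through a product of several quotients rather than through a single $\mathbb Z/j\mathbb Z$. Everything else, including the periodicity computation, the construction of $g$, and the uniqueness argument via density of $q(\mathbb Z_{\geq 0})$, is routine.
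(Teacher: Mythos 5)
Your proof is correct and takes essentially the same approach as the paper: both rest on the key fact that a locally constant function on $\mathbb Z/N\mathbb Z$ factors as a pullback via $\pi_j$ of a function on $\mathbb Z/j\mathbb Z$ for some finite divisor $j$ of $N$. The only difference is one of self-containedness: the paper cites this fact to a reference, whereas you prove it directly via compactness and the directedness of $\mathcal J_N$, and you also spell out the uniqueness argument using the density of the range of $q$, which the paper leaves implicit.
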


\begin{proof}
The result follows from an observation that any locally constant function on $\mathbb Z/N\mathbb Z$ is a pullback via $\pi_j$ of a function on $\mathbb Z/j\mathbb Z$ for some $j|N$, see \cite{RV}.
\end{proof}

\subsection{BD algebras}
Consider the Hilbert space $\ell^2(\mathbb Z_{\geq 0})$ equipped with the canonical basis $\{E_k\}_{k=0}^{\infty}$. Let   $U: \ell^2(\mathbb Z_{\geq 0}) \rightarrow \ell^2(\mathbb Z_{\geq 0})$ be the unilateral shift given by $UE_k=E_{k+1}$.
 The adjoint of $U$ is given by: 
 \[U^*E_k=\begin{cases}
 E_{k-1} & \textnormal{ if } k\geq 1\\
0 & \textnormal{ if } k=0,
 \end{cases}\]
and we have the relation: 
\begin{equation*}
U^*U=I.
\end{equation*}
 We also use the following diagonal label operator: 
 $$\mathbb K E_k= kE_k.$$ 
 If $\{a(k)\}_{k=0}^\infty$ is a bounded sequence, then $a(\mathbb K)$ is a bounded operator given by: 
 $$a(\mathbb K) E_k= a(k) E_k.$$
 In numerous formulas below we use convention $a(-1)=0$, so that, for example, we have:
 \[a(\K-I)E_k=\begin{cases}
 a(k-1)E_{k} & \textnormal{ if } k\geq 1\\
0 & \textnormal{ if } k=0.
 \end{cases}\]

Given a supernatural number $N$, we define the following algebra of diagonal operators:
\[\mathcal A_{\textrm{diag, per}}(N)= \left\{ a(\mathbb K) : \; a(k) \textnormal{ is $j$-periodic for some $j|N$}\right\}.\]
The norm closure of $\mathcal A_{\textrm{diag, per}}(N)$, denoted by $ A_{\textrm{diag, per}}(N)$, is a commutative unital C$^*$-algebra which, by Proposition \ref{loc_const}, is canonically isomorphic to the C$^*$-algebra of continuous functions on $ \mathbb Z/N\mathbb Z$:
 \begin{equation}\label{diag_id}
\overline{\mathcal A_{\textrm{diag, per}}(N)}=:  A_{\textrm{diag, per}}(N) \cong C( \mathbb Z/N\mathbb Z). 
\end{equation}

\begin{defin}
Given a supernatural number $N$, the {\it Bunce-Deddens-Toeplitz algebra}, denoted by $A(N)$, is the C$^*$-algebra of operators in $\ell^2(\mathbb Z_{\geq 0})$ generated by $U$ and $\mathcal A_{\textrm{diag, per}}(N)$:
\[A(N)= \textnormal{C}^*(U, \mathcal A_{\textrm{diag, per}}(N)).\]
\end{defin}
It is easy to see that for infinite $N$ this definition coincides with the original definition \cite{BD1}, \cite{BD2} given in the introduction.

Let $A_{\textnormal{diag}}(N)$ be  the commutative $^*$-subalgebra of  $A(N)$ consisting of operators diagonal with respect to the canonical basis $\{E_k\}$ of $\ell^2(\mathbb Z_{\geq 0})$.  If the space of sequences which are eventually zero is denoted by $c_{00}$,  we define:
 \[\mathcal A_{\textrm{diag}}(N):= \{a(\mathbb K): a(k)= a_0(k) + a_{\textrm{per}}(k), \; a_0(k)  \in c_{00} \textrm{ and } a_{\textrm{per}}(\mathbb K) \in  \mathcal A_{\textrm{diag, per}}(N)\}\]
which is a separable unital $^*$-algebra.  Some useful properties of this algebra are described in the following statement.

\begin{prop}\label{decomp_prop}
$\mathcal A_{\textrm{diag}}(N)$ is a dense $^*$-subalgebra of $A_{\textnormal{diag}}(N)$. If the space of sequences converging to zero is denoted by $c_0$, then we have the identification:
\[A_{\textnormal{diag}}(N)= \overline{\mathcal A_{\textrm{diag}}(N)}= \{a(\mathbb K): \; a(k)= a_0(k)+a_{\textrm{per}}(k), \  a_0(k)\in c_0,\ a_{\textrm{per}}(k) \in C(\mathbb Z/ N\mathbb Z) \}.
\]
\end{prop}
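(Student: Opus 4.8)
The plan is to prove the statement in three stages: first that $\mathcal A_{\textrm{diag}}(N)$ sits inside $A_{\textnormal{diag}}(N)$ as a $^*$-subalgebra, then that its closure equals the right-hand set $\mathcal D := \{a(\mathbb K) : a = a_0 + a_{\textrm{per}},\ a_0\in c_0,\ a_{\textrm{per}}\in C(\mathbb Z/N\mathbb Z)\}$, and finally that this closure exhausts $A_{\textnormal{diag}}(N)$. For the inclusion $\mathcal A_{\textrm{diag}}(N)\subseteq A_{\textnormal{diag}}(N)$, the periodic part of any symbol lies in $A(N)$ by definition, so it is enough to produce the $c_{00}$ diagonals inside $A(N)$. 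I would do this by writing the rank-one projection onto $E_m$ as $P_m = U^m(I-UU^*)(U^*)^m$, a polynomial in the generators, since $I-UU^*$ is the projection onto $E_0$; finite linear combinations of the $P_m$ give all of $c_{00}$. That the resulting set is a $^*$-algebra is routine: a product of two symbols of the form periodic $+\,c_{00}$ is again of this form (a $c_{00}$ sequence times a bounded sequence is $c_{00}$, and a product of periodic sequences is periodic because $\mathcal J_N$ is directed), and complex conjugation preserves both summands. Since $A_{\textnormal{diag}}(N)$ is norm-closed, this already gives $\overline{\mathcal A_{\textrm{diag}}(N)}\subseteq A_{\textnormal{diag}}(N)$.

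For the second stage I would first record that the decomposition $a=a_0+a_{\textrm{per}}$ is unique, i.e. $c_0\cap A_{\textrm{diag, per}}(N)=\{0\}$, because a periodic sequence attains its sup-norm infinitely often, so a uniform limit of periodic sequences lying in $c_0$ must vanish. Quantitatively, $\|a_{\textrm{per}}\|_\infty = \limsup_k|a_{\textrm{per}}(k)| = \|a_{\textrm{per}}+c_0\|_{\ell^\infty/c_0}$ for every $a_{\textrm{per}}\in A_{\textrm{diag, per}}(N)$, so the quotient map $\pi\colon \ell^\infty\to \ell^\infty/c_0$ is isometric on $A_{\textrm{diag, per}}(N)$; its image is therefore complete, hence closed, and consequently $\mathcal D=\pi^{-1}\big(\pi(A_{\textrm{diag, per}}(N))\big)$ is a closed subspace of $\ell^\infty$. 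This yields $\overline{\mathcal A_{\textrm{diag}}(N)}\subseteq \mathcal D$. The reverse inclusion is a direct approximation argument: truncate $a_0\in c_0$ to obtain $c_{00}$ approximants, and approximate $a_{\textrm{per}}\in C(\mathbb Z/N\mathbb Z)\cong A_{\textrm{diag, per}}(N)$ by genuinely periodic sequences using (\ref{diag_id}). Thus $\overline{\mathcal A_{\textrm{diag}}(N)}=\mathcal D$.

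The third stage is the substantial one. I would introduce the gauge action $\rho_\theta=\mathrm{Ad}(W_\theta)$ with $W_\theta E_k=e^{ik\theta}E_k$, under which $\rho_\theta(U)=e^{i\theta}U$ while periodic diagonals are fixed; hence $\rho_\theta$ preserves $A(N)$, and averaging produces a contractive conditional expectation $\Phi(T)=\frac{1}{2\pi}\int_0^{2\pi}\rho_\theta(T)\,d\theta$ onto the diagonal operators of $A(N)$, so that $\Phi(A(N))=A_{\textnormal{diag}}(N)$. Since $A(N)=\overline{\mathcal A(N)}$ and $\Phi$ is continuous with closed range, it then suffices to prove $\Phi(\mathcal A(N))\subseteq \mathcal A_{\textrm{diag}}(N)$, for then $A_{\textnormal{diag}}(N)=\overline{\Phi(\mathcal A(N))}\subseteq \overline{\mathcal A_{\textrm{diag}}(N)}=\mathcal D$, closing the circle. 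To control $\Phi$ I would put a general element of $\mathcal A(N)$ into the normal form $\sum_{m\ge 1}U^m a_m(\mathbb K) + a_0(\mathbb K) + \sum_{m\ge 1} b_m(\mathbb K)(U^*)^m$ with all coefficients in $\mathcal A_{\textrm{diag}}(N)$, whereupon $\Phi$ simply returns the degree-zero term $a_0(\mathbb K)$, since every summand of nonzero degree is strictly off-diagonal.

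The main obstacle is establishing this normal form with the stated control on the coefficients. The relevant moves are $U a(\mathbb K)=a(\mathbb K-I)U$, $U^* a(\mathbb K)=a(\mathbb K+I)U^*$, the isometry relation $U^*U=I$, and crucially the defect relation $UU^*=I-P_0$. The delicate point is that commuting $U$ and $U^*$ past a periodic diagonal and collapsing blocks $U^m(U^*)^{m'}$ does not preserve periodicity exactly: the convention $a(-1)=0$ and the defect $UU^*=I-P_0$ generate finite-rank diagonal corrections, so each coefficient inevitably acquires a $c_{00}$ summand --- which is precisely why $c_0$, rather than merely $C(\mathbb Z/N\mathbb Z)$, appears in the final answer. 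Concretely I would verify that the span $\mathcal S$ of the normal-form monomials $U^m a(\mathbb K)$, $a(\mathbb K)$, and $a(\mathbb K)(U^*)^m$ with $a\in\mathcal A_{\textrm{diag}}(N)$ is a $^*$-algebra: the shift of a periodic $+\,c_{00}$ symbol stays periodic $+\,c_{00}$, and each product $U^m(U^*)^{m'}$ differs from $U^{m-m'}$ (or $(U^*)^{m'-m}$) by a finite-rank diagonal. Since $\mathcal S$ contains the generators $U$, $U^*$, $\mathcal A_{\textrm{diag, per}}(N)$ and is contained in $\mathcal A(N)$, it follows that $\mathcal S=\mathcal A(N)$, and the normal form --- hence the desired inclusion $\Phi(\mathcal A(N))\subseteq \mathcal A_{\textrm{diag}}(N)$ --- is immediate.
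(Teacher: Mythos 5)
Your proof is correct, and it takes a genuinely different route from the paper's, which is much terser: the paper simply cites \cite{KMR1} for the fact that the diagonal operators lying in the C$^*$-algebra generated by the shift $U$ are precisely the compact diagonals $\{a_0(\mathbb K):a_0\in c_0\}$, and then passes to limits using the single displayed estimate $\|a_0(\mathbb K)+a_{\textrm{per}}(\mathbb K)\|\geq\|a_{\textrm{per}}(\mathbb K)\|$ together with the observation that the compact diagonals form an ideal in $A_{\textnormal{diag}}(N)$ with quotient $C(\mathbb Z/N\mathbb Z)$. Your stage 2 is exactly this estimate in different clothing: the identity $\|a_{\textrm{per}}\|_\infty=\limsup_k|a_{\textrm{per}}(k)|=\|a_{\textrm{per}}+c_0\|_{\ell^\infty/c_0}$ (valid because periodic-type sequences attain their norm at infinity) is equivalent to the paper's inequality, and your packaging via the isometry of the quotient map $\ell^\infty\to\ell^\infty/c_0$ on $A_{\textrm{diag, per}}(N)$ makes the closedness of $\mathcal D$ and the uniqueness of the splitting completely explicit. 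Where you genuinely diverge is the hard inclusion $A_{\textnormal{diag}}(N)\subseteq\mathcal D$, which the paper essentially outsources to the cited structure result: you prove it from scratch by gauge-averaging, $\Phi(T)=\frac{1}{2\pi}\int_0^{2\pi}\rho_\theta(T)\,d\theta$, combined with a normal form for $\mathcal A(N)$ showing $\Phi(\mathcal A(N))\subseteq\mathcal A_{\textrm{diag}}(N)$, with the $UU^*=I-P_0$ defect correctly identified as the source of the $c_{00}$ corrections. What your approach buys is self-containedness and a preview of the paper's own later technique (the action $\rho^{\mathbb K}_\theta$, its fixed-point characterization of $A_{\textnormal{diag}}(N)$, and Fourier/Ces\`aro averaging are precisely the tools of Sections 3 and 4); what the paper's approach buys is brevity, at the cost of leaving the exhaustion of $A_{\textnormal{diag}}(N)$ implicit in the reference. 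One small remark: your phrase ``$\Phi$ is continuous with closed range'' is not actually needed; the chain $A_{\textnormal{diag}}(N)=\Phi(A(N))=\Phi(\overline{\mathcal A(N)})\subseteq\overline{\Phi(\mathcal A(N))}\subseteq\overline{\mathcal A_{\textrm{diag}}(N)}=\mathcal D$ uses only continuity of $\Phi$ and the fact that diagonal operators are fixed by $\Phi$.
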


\begin{proof}
Other than $A_{\textrm{diag, per}}(N)$, the algebra $A_{\textnormal{diag}}(N)$ also contains additional diagonal operators that are in the algebra generated by the unilateral shift $U$. Those are precisely the compact diagonal operators:  $\{a_0(\mathbb K):  \  a_0(k)\in c_0\}$, see \cite{KMR1}. The additive decomposition $a(k)= a_0(k)+a_{\textrm{per}}(k)$ in $\mathcal A_{\textrm{diag}}(N)$ persists in completion $A_{\textnormal{diag}}(N)$ because compact diagonal operators form an ideal in $A_{\textnormal{diag}}(N)$, with the quotient isomorphic to $C(\mathbb Z/ N\mathbb Z)$. In fact, we have the following easy estimate:
\begin{equation*}
\|a(\mathbb K)\|= \|a_0(\mathbb K)+a_{\textrm{per}}(\mathbb K)\|\geq \|a_{\textrm{per}}(\mathbb K)\|,
\end{equation*}
which implies directly the decomposition when passing to limits.
\end{proof}

Let $\mathcal A(N)$ denote the $^*$-algebra generated algebraically by $U, U^*$ and $\mathcal A_{\textrm{diag, per}}(N)$. We have the following description of $\mathcal A(N)$.

\begin{prop}\label{curlA}
$\mathcal A(N)$ is a dense $^*$-subalgebra of $A(N)$. Moreover, we have the following description:
 \[\begin{aligned} \mathcal A(N):= \Big\{ &a\in A(N): \; a= \sum_{n\geq 0} U^n a_{n,0}^+(\mathbb K) + \sum_{n\geq 1}  a_{n,0}^-(\mathbb K)(U^*)^n+ \sum_{n\geq 0} U^n a_{n,\textrm{per}}^+(\mathbb K)\\
&+ \sum_{n\geq 1}  (U^*)^na_{n,\textrm{per}}^-(\mathbb K), \; a_{n,0}^{\pm}(k) \in c_{00}, a_{n, \textrm{per}}^{\pm}(\mathbb K) \in \mathcal A_{\textrm{diag, per}}(N), \textrm{ finite sums} \Big\} .\end{aligned}\]
\end{prop}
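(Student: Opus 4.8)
The plan is as follows. Density is the easy half: by definition $A(N)=\textnormal{C}^*(U,\mathcal A_{\textrm{diag, per}}(N))$ is the norm closure of the $^*$-algebra generated by $U$ and $\mathcal A_{\textrm{diag, per}}(N)$, and this generated $^*$-algebra is exactly $\mathcal A(N)$ (note $U^*$ lies in any $^*$-algebra containing $U$); hence $\mathcal A(N)$ is dense in $A(N)$ with no further work. So the real content is the explicit description. I denote by $\mathcal S$ the set on the right-hand side of the displayed formula and aim to prove $\mathcal A(N)=\mathcal S$ by two inclusions. Throughout I will use the relations $U^*U=I$, the fact that $P_0:=I-UU^*$ is the rank-one projection onto $E_0$, and the commutation rules $a(\K)U=Ua(\K+I)$ and $a(\K)U^*=U^*a(\K-I)$, valid for every diagonal operator $a(\K)$ and verified by a one-line action on basis vectors. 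Two auxiliary facts are recorded first: (1) every matrix unit satisfies $e_{pq}=U^pP_0(U^*)^q$, so all finite-rank operators lie in $\mathcal A(N)$ and, after sorting by the sign of $p-q$, in $\mathcal S$ as well (this is the finite-rank statement already used in Proposition~\ref{decomp_prop}, cf.\ \cite{KMR1}); and (2) the diagonal set $\mathcal D:=\{a_0(\K)+a_{\textrm{per}}(\K): a_0\in c_{00},\ a_{\textrm{per}}(\K)\in\mathcal A_{\textrm{diag, per}}(N)\}$ is a $^*$-algebra invariant under the shifts $a(\K)\mapsto a(\K\pm I)$, since shifting preserves both $c_{00}$ and $j$-periodicity.

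The inclusion $\mathcal S\subseteq\mathcal A(N)$ is then immediate: each of the four families of summands is a product of $U$'s, $U^*$'s and diagonal operators, the periodic ones lying in $\mathcal A_{\textrm{diag, per}}(N)$ by definition and the $c_{00}$ ones being finite-rank, hence in $\mathcal A(N)$ by fact (1). It is also convenient to observe, using $a(\K)(U^*)^n=(U^*)^n a(\K-nI)$ and the shift-invariance of $\mathcal D$, that $\mathcal S$ coincides with the set of operators $\sum_{n\geq 0}U^n c_n^+(\K)+\sum_{n\geq 1}(U^*)^n c_n^-(\K)$ with all coefficients $c_n^\pm\in\mathcal D$ and only finitely many nonzero; this symmetric normal form is the one I will actually manipulate.

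For the reverse inclusion I will show that $\mathcal S$ is a $^*$-subalgebra of $A(N)$; since it contains the generators $U,U^*$ and $\mathcal A_{\textrm{diag, per}}(N)$, and $\mathcal A(N)$ is the smallest such $^*$-algebra, this yields $\mathcal A(N)\subseteq\mathcal S$. Closure under sums, scalars and adjoints is routine from the normal form together with fact (2) and the identity $(U^n c(\K))^*=(U^*)^n\overline{c}(\K)$. The crux is closure under multiplication, which by bilinearity reduces to the four products of basic monomials $U^m c(\K)$ and $(U^*)^m c(\K)$. Commuting the middle diagonal factor to the outside via the shift rules, the cases $U^m c\cdot U^n d$ and $(U^*)^m c\cdot(U^*)^n d$ collapse to a single $U^{m+n}$ or $(U^*)^{m+n}$ times a product in $\mathcal D$, and the mixed case $(U^*)^m c\cdot U^n d$ simplifies cleanly because $(U^*)^m U^n=U^{n-m}$ or $(U^*)^{m-n}$ is an exact power of $U$ or $U^*$. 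I expect the one genuinely delicate case to be $U^m c\cdot(U^*)^n d$: here $U^m(U^*)^n$ is \emph{not} a pure power, since $UU^*\neq I$, and instead equals a power of $U$ (or $U^*$) plus a finite-rank correction built from the matrix units $e_{pq}$. The whole point of admitting $c_{00}$ coefficients in the description is precisely to absorb these corrections, and verifying that they land back in the $c_{00}$-part of $\mathcal D$ — so that the product remains in $\mathcal S$ — is the main obstacle and the step deserving careful bookkeeping; everything else is a direct application of the commutation relations.
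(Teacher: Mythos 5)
Your proposal is correct, but it proves the description by a different route than the paper. The paper's proof outsources the hard part: it cites Proposition 3.1 of \cite{KMR1}, which says that the polynomials in $U,U^*$ that are compact are \emph{precisely} the finite sums $\sum_{n\geq 0} U^n a_{n,0}^+(\mathbb K) + \sum_{n\geq 1} a_{n,0}^-(\mathbb K)(U^*)^n$ with $c_{00}$ coefficients; it then observes these form an ideal in $\mathcal A(N)$, so that modulo this ideal every polynomial in the generators can be normal-ordered via the commutation relation \eqref{the_com_rel} into the two ``periodic'' sums. You instead give a self-contained double-inclusion argument: show the right-hand side $\mathcal S$ is a $^*$-algebra containing the generators, hence contains $\mathcal A(N)$, with the converse inclusion coming from $e_{pq}=U^pP_0(U^*)^q$. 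Your flagged ``delicate case'' does go through exactly as you predict: for $m\geq n$ one has $U^m(U^*)^n=U^{m-n}\bigl(U^n(U^*)^n\bigr)=U^{m-n}\bigl(I-\textstyle\sum_{j=0}^{n-1}e_{jj}\bigr)$, and symmetrically for $m<n$, so the correction is multiplication by $I$ minus a $c_{00}$ diagonal projection, which your class $\mathcal D=c_{00}+\mathcal A_{\textrm{diag, per}}(N)$ absorbs (note also that the one-sided shift $a(\mathbb K)\mapsto a(\mathbb K-I)$ maps a periodic sequence to a periodic one plus a $c_{00}$ correction, so your shift-invariance of $\mathcal D$ is exactly right). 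What each approach buys: the paper's is shorter and simultaneously records the useful extra fact that the $c_{00}$ part coincides with the compact polynomials (used again in the paper, e.g.\ in the proof that derivations preserve $\mathcal K$); yours is elementary and independent of the external reference, at the cost of the multiplication bookkeeping, and it never isolates the compact part as an ideal.
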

 
\begin{proof}
By Proposition 3.1 of  \cite{KMR1} the polynomials in $U$ and $U^*$ which are compact operators are precisely the finite sums of the form:
$$\sum_{n\geq 0} U^n a_{n,0}^+(\mathbb K) + \sum_{n\geq 1}  a_{n,0}^-(\mathbb K)(U^*)^n,$$
where $a_{n,0}^{\pm}(k) \in c_{00}$. They form an ideal in $\mathcal A(N)$ so that, using additionally the commutation relation  \eqref{the_com_rel} below, all the remaining polynomials in $U, U^*$ and $\mathcal A_{\textrm{diag, per}}(N)$ can be written as the last two terms in the statement of the proposition.
\end{proof}

If $a(\mathbb K) \in A_{\textnormal{diag}}(N)$, then we have the commutation relation:
\begin{equation}\label{the_com_rel}
a(\mathbb K)U= Ua(\mathbb K+I).
\end{equation}
In  fact, $A(N)$ is  the  partial crossed product of $ A_{\textnormal{diag}}(N)$ with $\mathbb Z_{\geq 0}$ where the action of $\mathbb Z_{\geq 0}$ on $ A_{\textnormal{diag}}(N)$ is translation by one \cite{E}, \cite{St}. In the trivial case of $N=1$, the algebra $A(1)$ is the Toeplitz algebra, i.e., the C$^*$-algebra generated by $U$. If $N$ is finite, we can also identify $A(N)$ as the tensor product of the Toeplitz algebra with matrices of  size $N \times N$ (see \cite{D} and also Section 4):
\[A(N) \cong A(1) \otimes M_N(\mathbb C). \]

If  $\mathcal K$ are the compact operators in $\ell^2(\mathbb Z_{\geq 0})$, then $\mathcal K$ is an ideal in $A(N)$, and we have the short exact sequence: 
\[0 \rightarrow \mathcal K \rightarrow A(N) \xrightarrow{\xi} B(N) \rightarrow 0\]
where $B(N):= A(N)/\mathcal K$ and $\xi : A(N)\to A(N)/\mathcal K$ is the quotient map.
For any supernatural number $N$, we will call $B(N)$ the {\it Bunce-Deddens algebra}. The Bunce-Deddens algebras are simple for infinite $N$, mutually non-isomorphic and have unique tracial state  \cite{BD1},\cite{BD2},\cite{D}.  

\subsection{Structure of BD algebras}

We now proceed to a more detailed description of the Bunce-Deddens algebras $B(N)$.
Suppose $\{E_l\}_{l\in \mathbb Z}$ is the canonical  basis of $\ell^2(\mathbb Z)$, we let  $V:\ell^2(\mathbb Z) \rightarrow \ell^2(\mathbb Z)$ be the bilateral shift given by: 
$$VE_l=E_{l+1},$$ 
let $\mathbb L$ be the diagonal label operator: 
$$\mathbb L E_l= lE_l,$$  
and let $\mathcal B_{\textnormal{diag}}(N)$ be defined as:
\[\mathcal B_{\textnormal{diag}}(N):= \{b(\mathbb L): \; b(l+j)=b(l) \textnormal{ for some } j\mid N\}.\]
Notice that $B_{\textnormal{diag}}(N):= \overline{\mathcal B_{\textnormal{diag}}(N)}$ is naturally isomorphic to $C( \mathbb Z/N\mathbb Z )$, just like in (\ref{diag_id}).

Similarly to \eqref{the_com_rel} we have the commutation relation:
\begin{equation}\label{the_com_rel2}
b(\mathbb L)V= Vb(\mathbb L+I).
\end{equation}

For any $N$ we  introduce the Toeplitz-like operator $T: \mathcal B(\ell^2(\mathbb Z)) \rightarrow \mathcal B(\ell^2(\mathbb Z_{\geq 0}))$ given by the formula: 
\begin{equation}\label{Toep_def}
T(b)f= Pbf,
\end{equation}
where $f\in \ell^2(\mathbb Z_{\geq 0}) $, and $P: \ell^2(\mathbb Z) \rightarrow \ell^2(\mathbb Z)$ is the orthogonal projection onto the subspace $S=$ span$\{E_l : l\geq 0\}$, which is naturally isomorphic with $\ell^2(\mathbb Z_{\geq 0})$. It is clear that we have:
$$T(I|_{\ell^2(\mathbb Z)})=I|_{\ell^2(\mathbb Z_{\geq 0})}.$$ 
The operator $T$ is a linear, continuous, and $*$-preserving map between the spaces of bounded operators on $\ell^2(\mathbb Z)$ and  $\ell^2(\mathbb Z_{\geq 0})$, and moreover it has the following properties:

\begin{lem} \label{Toep_lemma}
For every $a,b \in \mathcal B(\ell^2(\mathbb Z))$ and any bounded diagonal operator $b (\mathbb L)$:
\begin{enumerate}[(i)]
\item $T(b\,V^n)= T(b) U^n$ and $T(V^{-n}b)= (U^*)^n T(b)$ for  $n \geq 0$
\item $T(a\,b (\mathbb L))= T(a) b (\mathbb K)$
\item $T( b (\mathbb L)\,a)=  b (\mathbb K) T(a)$.
\end{enumerate}
\end{lem}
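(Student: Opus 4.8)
The plan is to realize $T$ as a compression through the isometric embedding of $\ell^2(\mathbb Z_{\geq 0})$ onto $S$, and then reduce every identity to two elementary intertwining relations between the bilateral and unilateral shifts. First I would write $\iota : \ell^2(\mathbb Z_{\geq 0}) \to \ell^2(\mathbb Z)$ for the isometry with range $S$ that sends the $k$-th basis vector to $E_k$ for $k\geq 0$. Then $\iota^*\iota = I$ on $\ell^2(\mathbb Z_{\geq 0})$, $\iota\iota^* = P$, and the defining formula (\ref{Toep_def}) becomes simply
\[ T(b) = \iota^*\, b\, \iota. \]
All three parts then follow by substituting this expression and pushing the extra factor ($V^{\pm n}$ or $b(\mathbb L)$) across $\iota$ or $\iota^*$.

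Next I would establish the two key intertwining identities by evaluating on a basis vector $E_k$ with $k\geq 0$. Since $V^n \iota E_k = E_{k+n} = \iota\, U^n E_k$ and $n\geq 0$ forces $k+n\geq 0$, the image stays in $S$ and hence
\[ V^n \iota = \iota\, U^n, \qquad n \geq 0. \]
Taking adjoints and using $V^* = V^{-1}$ gives $\iota^* V^{-n} = (U^*)^n \iota^*$ for $n\geq 0$. For a bounded diagonal operator, comparing the action on $E_k$, $k\geq 0$, yields $b(\mathbb L)\iota = \iota\, b(\mathbb K)$, and, by adjoint (or directly, noting $\iota^* E_l = 0$ for $l<0$), $\iota^* b(\mathbb L) = b(\mathbb K)\iota^*$.

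With these in hand the lemma is immediate. For (i) I compute $T(bV^n) = \iota^* b V^n \iota = \iota^* b\,\iota\, U^n = T(b) U^n$ and $T(V^{-n}b) = \iota^* V^{-n} b\,\iota = (U^*)^n \iota^* b\,\iota = (U^*)^n T(b)$. For (ii), $T(a\,b(\mathbb L)) = \iota^* a\, b(\mathbb L)\iota = \iota^* a\,\iota\, b(\mathbb K) = T(a)\,b(\mathbb K)$, and (iii) follows symmetrically using $\iota^* b(\mathbb L) = b(\mathbb K)\iota^*$.

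The one genuinely delicate point, and the reason for the asymmetric placement of $V^n$ versus $V^{-n}$ in (i), is the restriction $n\geq 0$ in the identity $V^n\iota = \iota U^n$. For negative powers this relation fails: $V^{-1}$ sends $E_0$ to $E_{-1}\notin S$, so $P$ no longer commutes cleanly past the shift and the simple compression formula breaks down. Keeping the powers nonnegative on the appropriate side is exactly what guarantees that the shifted indices remain in $\mathbb Z_{\geq 0}$, and this is the crux of the argument; the diagonal identities in (ii)–(iii) carry no such obstruction since multiplication operators preserve each coordinate.
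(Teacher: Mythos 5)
Your proof is correct and is essentially the paper's argument: both rest on the single observation that $V^n$ preserves $S$ for $n\geq 0$ (your identity $V^n\iota=\iota\,U^n$ is exactly the paper's step $PbV^nf=PbPV^nf$), with the remaining parts following by the same direct calculations. Recasting the compression $T(b)=Pb|_S$ as $\iota^*b\,\iota$ is a tidy piece of bookkeeping but not a different method.
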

\begin{proof}
Those statements are obtained via direct calculations. For example, we have:
\begin{equation*}
T(bV^n)f=PbV^nf=Pb PV^nf =T(b) U^n f
\end{equation*}
because for $n\geq 0$ the operator $V^n$ preserves $S$. Other calculations are similar.
\end{proof}
Since any element in C$^*(V, \mathcal B_{\textnormal{diag}}(N))$ can be approximated by a finite sum of the form:
\begin{equation}\label{pol_In_B(N)}
\sum_{n \in \mathbb Z} V^nb_{n} (\mathbb L), 
\end{equation}
with $b_{n} (\mathbb L) \in \mathcal B_{\textnormal{diag}}(N)$, it is clear that $T$  maps C$^*(V, \mathcal B_{\textnormal{diag}}(N))$ into $A(N)$. 

\begin{prop}\label{iden_2}
For any supernatural number $N$ the algebras $B(N)$ and C\,$^*(V, \mathcal B_{\textnormal{diag}}(N))$ are isomorphic.
\end{prop}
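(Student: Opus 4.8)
The plan is to show that the compression $T$, although not multiplicative, becomes a $*$-homomorphism after composing with the quotient map, and that the resulting map
\[
\Theta := \xi\circ T:\ \mathrm{C}^*(V,\mathcal B_{\textnormal{diag}}(N))\longrightarrow B(N)
\]
is a $*$-isomorphism. I would first check that the multiplicative defect of $T$ lands in $\mathcal K$. Viewing $T(x)$ as the compression $PxP$ restricted to $S\cong\ell^2(\mathbb Z_{\geq 0})$, one computes $T(xy)-T(x)T(y)=Px(I-P)y|_S$. On the monomials $x=V^m c(\mathbb L)$, $y=V^n d(\mathbb L)$ that densely span the algebra, $(I-P)y|_S$ is supported on at most the finitely many basis vectors $E_k$ with $0\le k<-n$, so the defect is finite rank, hence compact. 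Since the bilinear defect map is norm continuous and $\mathcal K$ is closed, the defect is compact for \emph{all} $x,y$, so $\Theta$ is a $*$-homomorphism (linearity and $*$-preservation of $T$ were already noted).

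Surjectivity is then immediate: by Lemma \ref{Toep_lemma}(i) and (ii) we have $T(V)=U$ and $T(b(\mathbb L))=b(\mathbb K)$, so the image of $\Theta$ is a C$^*$-subalgebra of $B(N)$ containing $\xi(U)$ and $\xi(\mathcal A_{\textnormal{diag,per}}(N))$; since $\xi$ is onto and $A(N)=\mathrm{C}^*(U,\mathcal A_{\textnormal{diag,per}}(N))$, these generate all of $B(N)$.

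The main work is injectivity, and I would treat every $N$ uniformly by means of the circle action rather than splitting into the finite and infinite cases. Conjugation by $e^{i\theta\mathbb L}$ defines a gauge action $\rho_\theta$ on $\mathrm{C}^*(V,\mathcal B_{\textnormal{diag}}(N))$ scaling $V$ by $e^{i\theta}$ and fixing the diagonals, whose fixed-point algebra is exactly $\overline{\mathcal B_{\textnormal{diag}}(N)}\cong C(\mathbb Z/N\mathbb Z)$; conjugation by $e^{i\theta\mathbb K}$ gives the analogous action on $A(N)$, which descends to $B(N)$. Using Lemma \ref{Toep_lemma}(ii)--(iii) one checks that $T$ intertwines these actions, so $\Theta$ is equivariant. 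Averaging each action over $\theta$ yields conditional expectations $\Phi$ and $\Psi$ onto the respective fixed-point algebras satisfying $\Psi\circ\Theta=\Theta\circ\Phi$; these are faithful, since $\int\phi(\rho_\theta(z^*z))\,d\theta=0$ for a state $\phi$ forces the nonnegative continuous integrand to vanish identically, whence $z^*z=0$. Now if $\Theta(x)=0$ then $\Theta(\Phi(x^*x))=\Psi(\Theta(x^*x))=0$, and $\Phi(x^*x)$ lies in $C(\mathbb Z/N\mathbb Z)$; since $\Theta$ on the diagonal equals $\xi$ on $A_{\textnormal{diag,per}}(N)$, which is isometric by Proposition \ref{decomp_prop}, we get $\Phi(x^*x)=0$, and faithfulness of $\Phi$ gives $x=0$.

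I expect the delicate point to be this last equivariance-plus-faithful-expectation step: one must verify carefully that $T$ genuinely intertwines the two gauge actions (this is precisely where Lemma \ref{Toep_lemma} is needed) and that $\Theta$ is faithful on the fixed-point algebra, because injectivity of a $*$-homomorphism between C$^*$-algebras cannot be read off from injectivity on the dense polynomial subalgebra alone.
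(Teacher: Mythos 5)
Your proposal is correct, and at its core it is the same construction as the paper's: the paper also proves the proposition by showing that $T(b_1b_2)-T(b_1)T(b_2)$ is compact and then declaring that $[T](b)=T(b)+\mathcal K$ gives the required isomorphism. Where you differ is in completeness rather than in the choice of map: the paper's proof ends with that assertion (leaning on the analogy with classical Toeplitz operators), while you actually verify surjectivity (the image is a C$^*$-subalgebra containing $\xi(U)=\xi(T(V))$ and $\xi(b(\mathbb K))$, which generate $B(N)$) and, more substantially, injectivity, via equivariance of $T$ for the two gauge actions, faithfulness of the averaging conditional expectations, and injectivity of $\xi\circ T$ on the fixed-point algebra $C(\mathbb Z/N\mathbb Z)$. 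This buys a self-contained argument, uniform in $N$, for exactly the point the paper glosses over; your use of Lemma \ref{Toep_lemma}(ii)--(iii) for the intertwining is right, and the point-norm continuity needed to define and exploit the expectations does hold, because both actions act by scalar phases on the spanning monomials.

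One citation in your injectivity step should be repaired. The fact that $\xi$ is injective (indeed isometric) on $A_{\textrm{diag, per}}(N)$ does not follow from Proposition \ref{decomp_prop}: the estimate $\|a_0(\mathbb K)+a_{\textrm{per}}(\mathbb K)\|\geq\|a_{\textrm{per}}(\mathbb K)\|$ appearing there concerns perturbations by \emph{diagonal} compact operators, whereas you need a lower bound on $\|a_{\textrm{per}}(\mathbb K)+K\|$ over \emph{all} compact $K$. This is true and easy to supply: since $E_k\to 0$ weakly, one has $\|a_{\textrm{per}}(\mathbb K)+K\|\geq \limsup_{k\to\infty}|a_{\textrm{per}}(k)+(E_k,KE_k)|=\limsup_{k\to\infty}|a_{\textrm{per}}(k)|$, and this $\limsup$ equals $\sup_k|a_{\textrm{per}}(k)|$ because the image under $q$ of every tail $\{k\geq M\}$ is dense in $\mathbb Z/N\mathbb Z$, by the same cylinder-set argument that shows the range of $q$ is dense. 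With that substitution (density of the range of $q$ in place of Proposition \ref{decomp_prop}), your argument is complete.
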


\begin{proof}
For any $b_1, b_2 \in $ C$^*(V, \mathcal B_{\textnormal{diag}}(N))$, it can be shown just like for regular Toeplitz operators, that: 
\[T(b_1b_2)= T(b_1)T(b_2) + K\]
for some compact operator $K \in \mathcal K$. Now, the map 
\[[T]: \textnormal{C}^*(V, \mathcal B_{\textnormal{diag}}(N))\rightarrow A(N)/\mathcal K\]
 defined by:
 \[[T](b)= T(b) + \mathcal K\]
gives the required isomorphism. 
\end{proof}
Let $\mathcal B(N)$ be the $^*$-algebra generated algebraically by $V, V^{-1}$ and $\mathcal B_{\textnormal{diag}}(N)$. Notice that we have:
$$\mathcal B(N)=  \mathcal A(N)/(\mathcal A(N) \cap \mathcal K),$$
i.e. $\mathcal B(N)$ is the image of $\mathcal A(N)$ under the quotient map $\xi$. Also, because of the commutation relation \eqref{the_com_rel2}, the elements of $\mathcal B(N)$ are precisely the finite sums of the form given in \eqref{pol_In_B(N)}.

We have the following further identification of $B(N)$, see \cite{E}.
 \begin{prop}\label{cross_iden}
 For infinite $N$ the algebra $B(N)$ can be identified with the crossed product of $C(\mathbb Z/N\mathbb Z)$ with $\mathbb Z$, acting on $C(\mathbb Z/N\mathbb Z)$ via shifts. i.e.,
 \[B(N) \cong C(\mathbb Z/N\mathbb Z) \rtimes_{\sigma} \mathbb Z\] where for $f \in C(\mathbb Z/N\mathbb Z)$,  $\sigma f(x)= f(x+1)$.
  \end{prop}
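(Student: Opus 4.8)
The plan is to combine the identification from Proposition \ref{iden_2} with the universal property of the crossed product, and then to settle the isomorphism by invoking simplicity. By Proposition \ref{iden_2} it suffices to produce an isomorphism between $\mathrm{C}^*(V, \mathcal B_{\textnormal{diag}}(N))$ and $C(\mathbb Z/N\mathbb Z) \rtimes_{\sigma} \mathbb Z$. Recall that $B_{\textnormal{diag}}(N) = \overline{\mathcal B_{\textnormal{diag}}(N)} \cong C(\mathbb Z/N\mathbb Z)$, exactly as in \eqref{diag_id}, where $b(\mathbb L)$ corresponds to the continuous function $f$ determined by $b(l)=f(q(l))$. The decisive advantage of working on $\ell^2(\mathbb Z)$ rather than $\ell^2(\mathbb Z_{\geq 0})$ is that here $V$ is a genuine unitary, which is what a crossed product requires.

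First I would verify that $V$ together with this inclusion of $C(\mathbb Z/N\mathbb Z)$ forms a covariant representation of the dynamical system $(C(\mathbb Z/N\mathbb Z), \mathbb Z, \sigma)$. This is precisely the content of \eqref{the_com_rel2}: since $q$ is a group homomorphism with $q(1)=1$, the operator $b(\mathbb L+I)$ corresponds to the function $x\mapsto f(x+1)=\sigma f(x)$, so left-multiplying \eqref{the_com_rel2} by $V^*$ gives $V^*\, b(\mathbb L)\, V = b(\mathbb L+I)$, i.e. conjugation by $V$ implements $\sigma$ on the diagonal (the harmless choice of $\sigma$ versus $\sigma^{-1}$ produces isomorphic crossed products). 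The universal property of the full crossed product then yields a $*$-homomorphism $\Phi: C(\mathbb Z/N\mathbb Z) \rtimes_{\sigma} \mathbb Z \to \mathrm{C}^*(V, \mathcal B_{\textnormal{diag}}(N))$ carrying the canonical copy of $C(\mathbb Z/N\mathbb Z)$ onto $B_{\textnormal{diag}}(N)$ and the implementing unitary onto $V$. Its range contains both generators, and since every element of $\mathcal B(N)$ is a finite sum of the form \eqref{pol_In_B(N)}, $\Phi$ is surjective.

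The remaining, and main, step is injectivity of $\Phi$, and I would deduce it from simplicity of the crossed product. The translation action $\sigma$ of $\mathbb Z$ on $\mathbb Z/N\mathbb Z$ is free for infinite $N$: the element $1$ has infinite order, since $q(n)=0$ forces $j\mid n$ for every finite divisor $j\mid N$ and hence $n=0$, so $x+n=x$ has no solution with $n\neq 0$. The action is also minimal, because the orbit of any point $x$ equals $x+q(\mathbb Z)$, which is dense as $q(\mathbb Z)$ is dense in $\mathbb Z/N\mathbb Z$ and translation is a homeomorphism. A free minimal action of $\mathbb Z$ has simple reduced crossed product, and since $\mathbb Z$ is amenable the full and reduced crossed products coincide; thus $C(\mathbb Z/N\mathbb Z) \rtimes_{\sigma} \mathbb Z$ is simple. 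As $\Phi$ is a nonzero $*$-homomorphism, its kernel is a proper ideal, hence $\{0\}$, and $\Phi$ is the desired isomorphism.

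I expect the genuine difficulty to lie entirely in this injectivity step. An alternative that keeps the argument self-contained and avoids any reliance on \cite{E} is to match the two canonical faithful conditional expectations onto the diagonal: the averaging expectation for the dual circle action on the crossed product, and the expectation $a\mapsto \int_{S^1}\rho_\theta(a)\,d\theta$ on $\mathrm{C}^*(V,\mathcal B_{\textnormal{diag}}(N))$ coming from the gauge action $\rho_\theta(V)=e^{i\theta}V$, $\rho_\theta(b(\mathbb L))=b(\mathbb L)$, which is implemented by the diagonal unitaries $W_\theta E_l=e^{il\theta}E_l$. Both expectations are faithful and have the same image $C(\mathbb Z/N\mathbb Z)$ on which $\Phi$ is isometric, and $\Phi$ intertwines them; faithfulness then forces injectivity of $\Phi$ directly, without appealing to simplicity.
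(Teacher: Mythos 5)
Your proof is correct, but there is nothing in the paper to compare it against step by step: the paper states this proposition \emph{without proof}, deferring entirely to Exel's article \cite{E}. Your argument is thus a genuinely different, self-contained route, and each step checks out. Proposition \ref{iden_2} reduces the claim to identifying $\mathrm{C}^*(V,\mathcal B_{\textnormal{diag}}(N))$ with the crossed product; relation \eqref{the_com_rel2}, read through the identification $b(\mathbb L)=f(q(\mathbb L))$, gives the covariant pair and hence the surjection $\Phi$ from the full crossed product; freeness holds because for infinite $N$ the finite divisors of $N$ are unbounded, so $q(n)=0$ forces $n=0$; minimality is exactly the paper's earlier proposition that $q(\mathbb Z)$ is dense; and simplicity of the reduced ($=$ full, by amenability of $\mathbb Z$) crossed product of a free minimal action makes $\ker\Phi$ trivial. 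Note that you correctly invoke simplicity of the \emph{domain} of $\Phi$, i.e.\ of the crossed product; the simplicity of $B(N)$ for infinite $N$ that the paper quotes from \cite{BD1}, \cite{BD2}, \cite{D} would not do this job, so the appeal to the minimal-plus-topologically-free simplicity theorem is genuinely needed in that version of your argument. Your alternative via conditional expectations is, if anything, preferable: faithfulness of averaging over a point-norm continuous compact group action is an elementary state argument, the gauge action on $\mathrm{C}^*(V,\mathcal B_{\textnormal{diag}}(N))$ is visibly implemented by the diagonal unitaries $W_\theta$, and $\Phi$ intertwines the two expectations on the dense set of finite sums \eqref{pol_In_B(N)}, so injectivity follows with no simplicity theorem quoted (and since $G=\mathbb Z$ is abelian, the dual action lives on the full crossed product and full equals reduced, so faithfulness of that expectation is not an issue). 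In short: the paper's citation buys brevity, while your argument buys a proof whose every ingredient---density of $q(\mathbb Z)$, relation \eqref{the_com_rel2}, Proposition \ref{iden_2}---is already established inside the paper.
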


For finite $N$ one can identify $B(N)$ with $C(S^1)  \otimes M_N(\mathbb C)$. This is useful for the purpose of classifying derivations in $A(N)$ and $B(N)$  in the next section. We describe this identification in detail below. 

 \begin{prop}\label{iden_3}
 For a finite supernatural number $N$ there is an isomorphism:
$$\textnormal{C}^*(V, \mathcal B_{\textnormal{diag}}(N)) \cong C(S^1) \otimes  M_N(\mathbb C).$$
 \end{prop}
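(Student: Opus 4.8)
The plan is to realize $\textnormal{C}^*(V, \mathcal B_{\textnormal{diag}}(N))$ concretely by regrouping the standard basis of $\ell^2(\mathbb Z)$ into blocks of length $N$. Since $N$ is finite, every $b \in \mathcal B_{\textnormal{diag}}(N)$ is $N$-periodic, and writing each index uniquely as $l = Nm + r$ with $m \in \mathbb Z$ and $r \in \{0, 1, \ldots, N-1\}$ produces a unitary identification $\ell^2(\mathbb Z) \cong \ell^2(\mathbb Z) \otimes \mathbb C^N$, $E_{Nm+r} \mapsto E_m \otimes f_r$, where $\{f_r\}$ is the standard basis of $\mathbb C^N$ and the first factor is indexed by the block number $m$.

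First I would compute the images of the generators under this identification. A direct calculation shows that a periodic diagonal operator becomes $b(\mathbb L) = I \otimes D_b$ with $D_b = \textrm{diag}(b(0), \ldots, b(N-1)) \in M_N(\mathbb C)$, so $\mathcal B_{\textnormal{diag}}(N)$ is carried onto $I \otimes \{\textrm{diagonal matrices}\}$. For the shift, incrementing $l$ by one advances $r$ within a block and increments $m$ only when passing from $r = N-1$ to $r = 0$; hence $V = \sum_{r=0}^{N-2} I \otimes e_{r+1,r} + W \otimes e_{0,N-1}$, where $W$ is the bilateral shift on the block-index copy of $\ell^2(\mathbb Z)$ and the $e_{i,j}$ are the matrix units of $M_N(\mathbb C)$. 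In particular both families of generators lie in $\textnormal{C}^*(W) \otimes M_N(\mathbb C)$, which already yields the inclusion $\textnormal{C}^*(V, \mathcal B_{\textnormal{diag}}(N)) \subseteq \textnormal{C}^*(W) \otimes M_N(\mathbb C)$.

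For the reverse inclusion I would exhibit enough elements inside $\textnormal{C}^*(V, \mathcal B_{\textnormal{diag}}(N))$ to recover the whole tensor product. Iterating the shift $N$ times collapses the block structure, giving $V^N = W \otimes I$; since $W$ is a bilateral shift its spectrum is all of $S^1$, so $\textnormal{C}^*(W) \cong C(S^1)$ and we recover the central factor $C(S^1) \otimes I$. Next, compressing $V$ by the diagonal projections $P_r = I \otimes e_{r,r}$, which belong to $\mathcal B_{\textnormal{diag}}(N)$, yields $P_{r+1} V P_r = I \otimes e_{r+1,r}$ for $0 \le r \le N-2$; together with the diagonal units $I \otimes e_{r,r}$ and by taking products and adjoints these generate all of $I \otimes M_N(\mathbb C)$. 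As $C(S^1) \otimes I$ and $I \otimes M_N(\mathbb C)$ are commuting subalgebras whose products span a dense subalgebra of $C(S^1) \otimes M_N(\mathbb C)$, combining the two inclusions gives the asserted isomorphism.

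The bookkeeping in the first step is routine. The point requiring care is the reverse inclusion: one must check that the subdiagonal matrix units $e_{r+1,r}$ produced by compressing $V$, together with the diagonal units, really generate all of $M_N(\mathbb C)$ (they do, since repeated products recover every $e_{i,j}$), and that $V^N$ supplies a unitary with full circle spectrum, so that $\textnormal{C}^*(V^N) \cong C(S^1)$ rather than a proper subalgebra. Granting these, the two commuting subalgebras generate exactly $C(S^1) \otimes M_N(\mathbb C)$, completing the argument.
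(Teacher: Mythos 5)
Your proof is correct and takes essentially the same route as the paper: your unitary relabeling $E_{Nm+r}\mapsto E_m\otimes f_r$ is the paper's regrouping of the basis, your matrix units $I\otimes e_{sr}$ are exactly the paper's $P_{sr}=V^s e_N(\mathbb L)V^{-r}$, and your identity $V=\sum_{r=0}^{N-2} I\otimes e_{r+1,r}+W\otimes e_{0,N-1}$ with $V^N=W\otimes I$ is the paper's decomposition of $V$ together with its use of $\textnormal{C}^*(V^N)\cong C(S^1)$. The only (minor) added value of your spatial formulation is that the final step---that the two commuting subalgebras generate the full tensor product rather than a quotient of it---becomes automatic, whereas the paper asserts it directly from commutativity.
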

 
 \begin{proof}
 We first relabel the basis elements of $\ell^2(\mathbb Z)$ as follows:
 $$\{E_{kN+j}\; \vert \; k \in \mathbb Z, 0 \leq j <N\}.$$ 
 Consider the following sequence: 
 \begin{equation}\label{eN_def}
e_N(l)=\begin{cases}
 1 & \textnormal{ if } N \mid l\\
 0 & \textnormal{otherwise}.
 \end{cases} 
\end{equation}
 Then clearly we have periodicity
 $$e_N(l+N)=e_N(l),$$ 
 and the following formula: 
 $$e_N(\mathbb L) E_{kN+j}= \delta_{j,0} E_{kN+j}.$$ 
 For $0 \leq s,r < N$, we define the operators: 
 $$P_{sr}:= V^s e_N(\mathbb L) V^{-r}.$$ 
 It is easy to verify using the above formulas that $P_{sr}$ have the following properties:
 \begin{enumerate}[(i)]
 \item $P_{sr}^*= P_{rs}$
 \item $P_{sr} P_{tq}= \delta_{tr}P_{sq}$.
 \end{enumerate}
 As a consequence, 
 if $E_{sr}$ are the standard basis elements of $M_N(\mathbb C)$, then the map $P_{sr} \mapsto E_{sr}$ induces the isomorphism C$^*(P_{sr}) \cong M_N(\mathbb C)$. Moreover, any element of $\mathcal B_{\textnormal{diag}}(N)$ can be written as a linear combination of $P_{rr}$, $0 \leq r < N$. We also have the relation:
 \[V= P_{10}+P_{21}+ \cdots+ P_{(N-1)(N-2)} + V^N P_{0(N-1)},\]
 which can be verified by a direct calculation on basis elements.
 Therefore, we obtain:
 $$\textnormal{C}^*(V, \mathcal B_{\textnormal{diag}}(N)) \cong {\textnormal{C}}^*(P_{sr}, V^N).$$ 
 Consequently, because $V^N$ commutes with the operators 
 $P_{sr}$ for all $0 \leq s,r < N$, we have:
 \[\textnormal{C}^*(V, \mathcal B_{\textnormal{diag}}(N)) \cong  \textnormal{C}^*(V^N) \otimes \textnormal{C}^*(P_{sr}) \cong C(S^1) \otimes M_N(\mathbb C).\]
  Here C$^*(V^N)$ is isomorphic with $C(S^1)$ because $V^N$ is equivalent to the usual bilateral shift.
 \end{proof}

%%%%%%%%%%%%%%%%%%%%%%%%%%%%%%%%%%%%%%%%%%%%%%%%%%%%%%%%%%%%%%%%%%%%%%%%%%%%%%

\section{Covariant Derivations}

\subsection{Derivations.}

A {\it derivation} $d$ in $A(N)$ with domain $\mathcal A(N)$ is a linear map $d: \mathcal A(N) \rightarrow A(N)$ which satisfies the Leibniz rule: 
\[d(ab)= d(a)b+ ad(b)\]
 for all $a,b \in \mathcal A(N)$. In this paper we only study derivations $d$ with domain $\mathcal A(N)$, and derivations $\delta$ in $B(N)$ with domain $\mathcal B(N)$, so we will not explicitly mention domains below. 
 
 A derivation $d$ is called {\it approximately inner} if there are $a_n\in A(N)$  such that 
 $$d(a) = \lim_{n\to\infty}[a_n,a]$$ 
 for $a\in\mathcal A(N)$.
 
The first important observation is that any derivation in $A(N)$ preserves compact operators.
 
 \begin{theo}
 If $d: \mathcal A(N) \rightarrow  A(N)$ is a derivation, then $d: \mathcal A(N)\cap\mathcal K \rightarrow \mathcal K$.
 \end{theo} 
 
 \begin{proof}
 It is enough to prove that $d(P_0)$ is compact, where $P_0$ is the orthogonal projection onto the one-dimensional subspace spanned by $E_0$, because $\mathcal A(N)\cap\mathcal K$ is comprised of linear combinations of expressions of the form: $U^rP_0(U^*)^s$.  The result then follows immediately from the Leibniz property. To see that $d(P_0)$ is compact, simply apply $d$ to both sides of the relation $P_0=P_0^2$ to obtain:
 $$d(P_0)=d(P_0)P_0+P_0d(P_0)\in \mathcal K,$$
 which completes the proof.
 \end{proof}
 
 As a consequence of the above theorem, if $d:\mathcal A(N) \rightarrow A(N)$ is a derivation in $A(N)$, then $[d]: \mathcal B(N) \rightarrow B(N)$ defined by 
\[[d](a+\mathcal K) := da+ \mathcal K\]
 gives a derivation in $B(N)$ where, as before, $\mathcal B(N)=  \mathcal A(N)+ \mathcal K$.

%%%%%%%%%%%%%%%%%%%%%%%%%%%%%%%%%%%%%%%%%%%%%%%%%%

\subsection{Classification of covariant derivations.}

 For each $\theta \in [0, 2\pi)$, let $\rho^{\mathbb K}_{\theta}: A(N) \rightarrow A(N)$ be  defined by: 
\[\rho^{\mathbb K}_{\theta} (a)= e^{i\theta \mathbb K} a e^{-i\theta \mathbb K}.\]
Then we have:
\begin{equation}\label{rho_action}
\rho^{\mathbb K}_{\theta}(U)= e^{i\theta}U,\ \rho^{\mathbb K}_{\theta}(U^*)= e^{-i\theta}U^* \textrm{ and } \rho^{\mathbb K}_{\theta}(a(\mathbb K))= a(\mathbb K).
\end{equation}
Thus, $\rho^{\mathbb K}_{\theta}$ is a well-defined automorphism of $A(N)$, and $\rho^{\mathbb K}_{\theta}$ preserves $\mathcal A(N)$.

 \begin{defin}
 Given $n \in \mathbb Z$, a derivation $d$ in $A(N)$ is said to be a {\it $n$-covariant derivation} if the relation 
 $$(\rho^{\mathbb K}_{\theta})^{-1}d(\rho^{\mathbb K}_{\theta}(a))= e^{-in\theta} d(a)$$ holds.
 \end{defin}
 
 Similarly, for $\theta \in [0, 2\pi)$,  we let $\rho^{\mathbb L}_{\theta}$ be  the automorphism of $B(N)$, preserving $\mathcal B(N)$, defined by: 
\[\rho^{\mathbb L}_{\theta}(b)= e^{i\theta \mathbb L} b e^{-i\theta \mathbb L}.\]

 \begin{defin}
 Given $n \in \mathbb Z$, a derivation $\delta$ in $B(N)$ is said to be a {\it $n$-covariant derivation} if the relation 
 $$(\rho^{\mathbb L}_{\theta})^{-1}\delta(\rho^{\mathbb L}_{\theta}(b))= e^{-in\theta} \delta(b)$$ holds.
 \end{defin}

An important step in classifying derivations on Bunce-Deddens-Toeplitz algebras is the classification the $n$-covariant derivations in $A(N)$ since they arise as Fourier coefficients of general derivations. First we establish the following useful description of covariant subspaces in $A(N)$.

\begin{prop}
We have the following equality:
\[A_{\textnormal{diag}}(N)= \{a\in A(N): \; \rho^{\mathbb K}_{\theta}(a)=  a\}.\]
\end{prop}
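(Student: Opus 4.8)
The plan is to prove the two inclusions separately by tracking how $\rho^{\mathbb K}_{\theta}$ acts on matrix coefficients. For $a \in A(N)$ write $a_{kl} = \langle E_k, a E_l\rangle$ for its entries with respect to the canonical basis $\{E_k\}$. Since $e^{i\theta \mathbb K}E_k = e^{i\theta k}E_k$, a direct computation gives
\[\langle E_k, \rho^{\mathbb K}_{\theta}(a)E_l\rangle = e^{i\theta(k-l)}\,a_{kl},\]
so conjugation by $e^{i\theta \mathbb K}$ simply multiplies the $(k,l)$-entry by the phase $e^{i\theta(k-l)}$. Everything will follow from this one identity.

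The inclusion $A_{\textnormal{diag}}(N) \subseteq \{a\in A(N): \rho^{\mathbb K}_{\theta}(a)=a\}$ is then immediate: a diagonal operator has $a_{kl}=0$ for $k\neq l$, while on the diagonal $k=l$ the phase $e^{i\theta(k-l)}$ is trivial, so every diagonal element is fixed. Equivalently, this is just the observation that a diagonal operator commutes with $e^{i\theta \mathbb K}$, already recorded in \eqref{rho_action}.

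For the reverse inclusion I would suppose $\rho^{\mathbb K}_{\theta}(a)=a$ for all $\theta\in[0,2\pi)$ and compare entries with the displayed formula, obtaining $e^{i\theta(k-l)}a_{kl}=a_{kl}$ for all $k,l$ and all $\theta$. Whenever $k\neq l$ one can choose $\theta$ with $e^{i\theta(k-l)}\neq 1$, which forces $a_{kl}=0$. Hence a fixed $a$ is diagonal with respect to $\{E_k\}$, and since $a\in A(N)$ by hypothesis, it lies in the diagonal subalgebra $A_{\textnormal{diag}}(N)$ by definition, completing the argument.

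I expect no serious obstacle here; the entire content is the phase computation. The one point warranting care is the final step, where membership in $A_{\textnormal{diag}}(N)$ must be concluded rather than membership in the larger set of all diagonal bounded operators on $\ell^2(\mathbb Z_{\geq 0})$. This is automatic because $A_{\textnormal{diag}}(N)$ was defined to be exactly the diagonal operators inside $A(N)$, and $a$ is assumed to belong to $A(N)$. As a conceptually cleaner alternative I would mention the averaging map $\Phi(a)=\frac{1}{2\pi}\int_0^{2\pi}\rho^{\mathbb K}_{\theta}(a)\,d\theta$: the integral converges in norm since $\theta\mapsto\rho^{\mathbb K}_{\theta}(a)$ is norm-continuous, so $\Phi(a)\in A(N)$, its entries are $\delta_{kl}a_{kl}$, and $a$ is fixed precisely when $\Phi(a)=a$, exhibiting the fixed-point set as the diagonal operators in $A(N)$.
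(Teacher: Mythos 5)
Your proof is correct and follows essentially the same route as the paper: both compute $\langle E_k, \rho^{\mathbb K}_{\theta}(a)E_l\rangle = e^{i\theta(k-l)}\langle E_k, aE_l\rangle$ and conclude that the fixed points are exactly the diagonal operators in $A(N)$, which is $A_{\textnormal{diag}}(N)$ by definition. Your closing remark about the averaging map $\Phi$ is a nice supplementary observation but not needed, as the phase argument already settles the claim.
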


\begin{proof}
Clearly if $a\in A_{\textnormal{diag}}(N)$, then $ \rho^{\mathbb K}_{\theta}(a)=  a$ by formula (\ref{rho_action}).
Conversely, if $a\in A(N)$ satisfies $ \rho^{\mathbb K}_{\theta}(a)=  a$ then the equation:
$$(E_k, e^{i\theta \mathbb K} a e^{-i\theta \mathbb K} E_l)= (E_k, aE_l)$$ 
implies that have:
$$e^{i\theta (k-l)} (E_k, aE_l)= (E_k, aE_l)$$ 
for every $\theta \in [0, 2\pi)$ and every $k,l$, from which it follows that $a$ is a diagonal operator.
\end{proof}

\begin{prop}\label{n_spec_subsp}
Denote by $A_n(N)$ the $n$-th spectral subspace of $\rho^{\mathbb K}_{\theta}$:
\[A_n(N):= \{a\in A(N): \; \rho^{\mathbb K}_{\theta}(a)= e^{i n\theta} a\}.\]
Then we have: 
 \[A_n(N)= \begin{cases}
\{U^na(\mathbb K): \; a(\mathbb K)\in A_{\textnormal{diag}}(N)\} & \textnormal{ if } n\geq 0\\
\{a(\mathbb K)(U^*)^{-n}: \; a(\mathbb K)\in A_{\textnormal{diag}}(N)\} & \textnormal{ if } n<0.
\end{cases}\] 
\end{prop}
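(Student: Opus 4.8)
The plan is to prove the two inclusions separately, handling $n\geq 0$ and $n<0$ in parallel. The easy inclusion $\supseteq$ is immediate from the multiplicativity of the automorphism $\rho^{\mathbb K}_{\theta}$ together with formula (\ref{rho_action}): for $n\geq 0$ and $a(\mathbb K)\in A_{\textnormal{diag}}(N)$ one computes $\rho^{\mathbb K}_{\theta}(U^na(\mathbb K))=\rho^{\mathbb K}_{\theta}(U^n)\,\rho^{\mathbb K}_{\theta}(a(\mathbb K))=e^{in\theta}U^na(\mathbb K)$, since $\rho^{\mathbb K}_{\theta}(U)=e^{i\theta}U$ and $\rho^{\mathbb K}_{\theta}$ fixes diagonal operators; the case $n<0$ is symmetric. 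One also notes $U^na(\mathbb K)\in A(N)$ because both factors lie in $A(N)$.

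For the reverse inclusion I would start, exactly as in the preceding proposition, from matrix entries. Writing $(E_k,\rho^{\mathbb K}_{\theta}(a)E_l)=e^{i\theta(k-l)}(E_k,aE_l)$ and comparing with the defining relation $\rho^{\mathbb K}_{\theta}(a)=e^{in\theta}a$, I obtain $e^{i\theta(k-l)}(E_k,aE_l)=e^{in\theta}(E_k,aE_l)$ for all $\theta\in[0,2\pi)$, which forces $(E_k,aE_l)=0$ unless $k=l+n$. Hence $a$ is supported on the $n$-th off-diagonal: $aE_l=c_lE_{l+n}$ for suitable scalars $c_l$, with the convention that $c_l=0$ whenever $l+n<0$. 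In particular, for $n<0$ the operator $a$ annihilates $E_0,\dots,E_{-n-1}$.

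The crux is then to peel off a single shift and recognize the remaining diagonal factor as an element of $A_{\textnormal{diag}}(N)$. For $n\geq 0$ I set $b:=(U^*)^na$; this again lies in $A(N)$ because $U^*\in A(N)$, and a direct check on basis vectors gives $bE_l=c_lE_l$, so $b$ is diagonal. Being a diagonal operator inside $A(N)$, it belongs to $A_{\textnormal{diag}}(N)$ by the very definition of that subalgebra. To recover $a=U^nb$, I observe that $U^n(U^*)^n$ is the orthogonal projection onto $\operatorname{span}\{E_k:k\geq n\}$ and that the range of $a$ already lies in that subspace, whence $U^nb=U^n(U^*)^na=a$. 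For $n<0$, writing $m=-n$, I instead set $b:=aU^m$; the symmetric computation yields $bE_l=c_{l+m}E_l$, again diagonal and in $A(N)$, hence in $A_{\textnormal{diag}}(N)$, while $aU^m(U^*)^m=a$ because $a$ annihilates $E_0,\dots,E_{m-1}$, giving $a=b(U^*)^m=b(U^*)^{-n}$.

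The only genuinely delicate point is the edge effect caused by $U$ being a non-unitary isometry: the products $U^n(U^*)^n$ and $U^m(U^*)^m$ are projections rather than the identity, so one must invoke the support information about $a$ from the matrix-entry step in order to conclude $U^n(U^*)^na=a$ (and likewise on the other side). This is precisely what forces the asymmetric form of the answer -- a left factor $U^n$ for $n\geq 0$ versus a right factor $(U^*)^{-n}$ for $n<0$ -- and is the reason the two cases cannot be merged into one. Everything else reduces to routine verifications on basis vectors and the observation that $A_{\textnormal{diag}}(N)$ is, by definition, the set of all diagonal operators in $A(N)$.
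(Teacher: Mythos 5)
Your proof is correct, and its overall architecture --- get the easy inclusion from \eqref{rho_action}, then peel a power of the shift off an arbitrary element of $A_n(N)$ to exhibit a diagonal factor --- is the same as the paper's; the execution, however, differs in one instructive way. For $n\geq 0$ the paper multiplies on the \emph{right}: $a(U^*)^n$ is fixed by $\rho^{\mathbb K}_{\theta}$, hence lies in $A_{\textnormal{diag}}(N)$ by the preceding proposition, and $a$ is recovered from the exact identity $(U^*)^nU^n=I$, so the isometry defect of $U$ never enters; a final application of the commutation relation \eqref{the_com_rel} then puts the element in the stated form $U^n a(\mathbb K+nI)$. You multiply on the side where the defect does appear ($U^n(U^*)^n$ is only a projection), which is exactly why you need the supplementary matrix-entry computation identifying the support of $a$ before you can cancel; in effect you re-derive, for general $n$, the content of the preceding proposition rather than invoking it. Both routes are sound: the paper's choice of side makes your ``genuinely delicate point'' vanish entirely (also in the $n<0$ case, where one multiplies on the left by $U^{-n}$ and uses $(U^*)^{-n}U^{-n}=I$), at the cost of one extra use of \eqref{the_com_rel}, while your version is more self-contained and makes explicit that $A_n(N)$ sits on the $n$-th off-diagonal. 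One small caveat: the asymmetry of the final answer is not really about left versus right placement of the diagonal factor --- by \eqref{the_com_rel} elements of $A_n(N)$ for $n\geq 0$ can equally well be written as $a(\mathbb K)U^n$ --- what is genuinely forced by the non-unitarity of $U$ is only the appearance of $U^n$ for $n\geq 0$ versus $(U^*)^{-n}$ for $n<0$.
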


\begin{proof}
We will give the proof for $n>0$; the proof for $n < 0$ works similarly. 

Since we have:
$$\rho^{\mathbb K}_{\theta}(U^n a(\mathbb K))=e^{in\theta}U^n a(\mathbb K),$$ 
one containment clearly follows. Conversely, if $a\in A_n(N)$ then $a(U^*)^n \in A_{\textnormal{diag}}(N)$, hence is of the form $a(U^*)^n= a(\mathbb K)$ by the previous proposition. Consequently, we have:
$$a=a(\mathbb K)U^n = U^n a(\mathbb K+nI),$$
which shows the other containment.
\end{proof}

It turns out that $n$-covariant derivations in $A(N)$ can be described explicitly.

\begin{theo} \label{n_cov_der_formula}
If $d$ is an $n$-covariant derivation in $A(N)$, then there exists a diagonal operator $\beta_n(\mathbb K)$  such that $d$ can be written as:
 \begin{equation}\label{d_com_formulas}
d(a)=\begin{cases}
 [U^n\beta_n(\mathbb K), a] &  \textnormal{ if } n\geq 0\\
 [\beta_n(\mathbb K)(U^*)^{-n}, a] &  \textnormal{ if } n< 0,
 \end{cases}
  \end{equation}
where the operator $\beta_n(\mathbb K)$ satisfies the following conditions: if $N$ is infinite and $n \neq 0$ or $N$ is finite but $N \nmid n$, then 
$$\beta_n(\mathbb K) \in A_{\textnormal{diag}}(N),$$ (so in particular it is bounded); otherwise:
$$\beta_n(\mathbb K)-\beta_n(\mathbb K-I)\in A_{\textnormal{diag}}(N).$$
The operator $\beta_n(\mathbb K)$ is unique except when $n=0$ where $\beta_0(\mathbb K)$ is unique up to an additive constant. Conversely, given any $\beta_n(\mathbb K)$ satisfying those properties, the formulas above define $n$-covariant derivations in $A(N)$.

\end{theo}

\begin{proof}
Suppose $n>0$ and $d$ is a $n$-covariant derivation in $A(N)$. It follows that we have $d(a(\mathbb K)) \in A_n(N)$, and hence the formula:
 \begin{equation}\label{d_tilde}
 d(a(\mathbb K))= U^n \tilde d(a(\mathbb K))
 \end{equation}
for some $\tilde d(a(\mathbb K)) \in A_{\textnormal{diag}}(N)$ by Proposition \ref{n_spec_subsp}.

Similarly, there exists $\alpha_n(\mathbb K) \in  A_{\textnormal{diag}}(N)$ such that:
 \begin{equation}\label{d_on_U_U*}
 \begin{aligned}  d(U^*)&= -U^{n-1}\alpha_n(\mathbb K)  \;\;\;\textnormal{ and }\\
d(U)&=  U^{n+1}\alpha_n(\mathbb K+I),
 \end{aligned}\end{equation}
 where the last equation follows from the relation $d(U^*)U+U^*d(U)=0$. 

From formula \eqref{d_tilde} for every $a(\mathbb K), b(\mathbb K) \in \mathcal A_{\textnormal{diag}}(N)$ we have the following: 
\[\begin{aligned} \tilde d(a(\mathbb K)b(\mathbb K))&= (U^*)^n d(a(\mathbb K)b(\mathbb K)) \\
&=(U^*)^nd(a(\mathbb K)) b(\mathbb K)  + (U^*)^n a(\mathbb K)d(b(\mathbb K)) \\
&=\tilde d(a(\mathbb K))b(\mathbb K) + a(\mathbb K +nI) \tilde d(b(\mathbb K)).  \end{aligned}\] 
Since $\tilde d(a(\mathbb K)b(\mathbb K))= \tilde d(b(\mathbb K)a(\mathbb K))$, it follows that: 
\begin{equation}\label{a-bequ}
\tilde d(a(\mathbb K))[b(\mathbb K )-b(\mathbb K +nI)]= \tilde d(b(\mathbb K))[a(\mathbb K )-a(\mathbb K +nI)]. 
\end{equation}
 For given $n$ we can always choose $a(\mathbb K)$ such that $a(k )-a(k+n) \neq 0$ for every $k$. Using such $a(\mathbb K)$ we define:
 $$\beta_n(\mathbb K)= \tilde d(a(\mathbb K))(a(\mathbb K )-a(\mathbb K +nI))^{-1},$$ 
 which is independent of the choice of $a$ by the formula (\ref{a-bequ}). It follows that:
 \begin{equation}\label{d_on_a(K)}
d(a(\mathbb K))=  U^n\beta_n(\mathbb K) [a(\mathbb K )-a(\mathbb K +nI)]
\end{equation}
 for any $a(\mathbb K)$ because, when $a(k +n)-a(k) = 0$ for some $k$, then both sides of the above equation are zero, as implied again by the formula (\ref{a-bequ}).
  
Next, applying $d$ to the commutation relation $U^*a(\mathbb K)=a(\mathbb K +I)U^*$ we obtain:
\[(\beta_n(\mathbb K ) - \beta_n(\mathbb K-I) - \alpha_n(\mathbb K)) [a(\mathbb K )-a(\mathbb K +nI)]=0.\]
It follows that we must have:
\[ \alpha_n(\mathbb K) =\beta_n(\mathbb K ) - \beta_n(\mathbb K-I).\]
This leads to formulas:
 \begin{equation*}
 \begin{aligned}  d(U^*)&= -U^{n-1}(\beta_n(\mathbb K ) - \beta_n(\mathbb K-I) ) \;\;\;\textnormal{ and }\\
d(U)&=  U^{n+1}(\beta_n(\mathbb K + I) - \beta_n(\mathbb K)),
 \end{aligned}\end{equation*}
and so we have that $d(a)= [U^n\beta_n(\mathbb K), a]$ holds true for all the generators and hence for every $a\in \mathcal A(N)$ and $n>0$. Notice also that we can compute $\beta_n$ in terms of $\alpha_n$ by the formula:
\[\beta_n(k)= \sum_{i=0}^{k}\alpha_{n}(i),\] 
The proof for $n<0$ works similarly.

If $n=0$,  the formulas for $d(U)$ and $d(U^*)$ are:
\[d(U)=U\alpha_0(\mathbb K), \;\;\; d(U^*)=-\alpha_0(\mathbb K) U^*.\]
We claim that in this case we have: 
$$d(a(\mathbb K))=0.$$ 
This is because for an invariant derivation we have: 
$$d:\mathcal A_{\textnormal{diag}}(N)\to A_{\textnormal{diag}}(N),$$ 
and elements of $\mathcal A_{\textnormal{diag}}(N)$ are finite linear combinations of diagonal orthogonal projections. If $P\in \mathcal A_{\textnormal{diag}}(N)$ is such a projection, by applying $d$ to $P^2=P$ we obtain: 
$$(I-2P)d(P)=0,$$ 
which implies $d(P)=0$. 

To obtain $d(a)=[\beta_0(\K),a]$, we define the operator $\beta_0(\mathbb K)$ as the solution of the equation:
$$\alpha_0(\mathbb K)=\beta_0(\mathbb K+I)-\beta_0(\mathbb K),$$ 
and so it is determined only up to additive constant. We usually make a particular choice:
\[\beta_0(k)= \sum_{i=0}^{k-1}\alpha_{0}(i),\] 
with $\beta_0(0)=0$.

There are additional restrictions on $\beta_n(\mathbb K)$. For $N$ infinite and $n \neq 0$,  we must have 
\begin{equation*}
 \tilde d(a(\mathbb K))=\beta_n(\mathbb K) [a(\mathbb K )-a(\mathbb K +nI)] \in A_{\textnormal{diag}}(N)
\end{equation*}
  for every $a(\mathbb K) \in \mathcal A_{\textnormal{diag}}(N)$. By choosing for example $a(k)= e^{2\pi ik/l}$, an $l$-periodic sequence, with $l\mid N$ but $l \nmid n$, it is clear that $a( k +n)-a( k) \neq 0$ for every $k$ and so the operator $a(\K +n)-a(\K)$ is invertible. It follows that $\beta_n(\mathbb K)$ must belong to $A_{\textnormal{diag}}(N)$.

 If $N$ is finite and $N\nmid n$ then we can choose an $N$-periodic sequence and argue as above to show that $\beta_n(\mathbb K) \in A_{\textnormal{diag}}(N)$.
 
Conversely, given any $\beta_n(\mathbb K)$ satisfying the properties and derivation $d$ given by the commutator formula (\ref{d_com_formulas}), the expressions for $d$ on generators (\ref{d_on_U_U*}), (\ref{d_on_a(K)}) imply that $d$ is a well defined derivation in $A(N)$.
In particular, if $N$ is finite, $N\mid n$, and $a(k)$ is any $l$-periodic sequence where $l\mid N$ is also $n$-periodic, then $a(k )-a(k+n)=0$. Moreover, if $a(k)$ a sequence that is eventually zero, then for large enough $k$ we have $a(k )-a(k+n)=0$. Thus $ \tilde d(a(\mathbb K))$ is eventually zero for every $k$, and there are no additional restrictions on $\beta_n(\mathbb K)$ in this case.
\end{proof}

From this theorem it is clear that if $N$ is infinite and $n \neq 0$ or $N$ is finite but $N \nmid n$, then the $n$-covariant derivation $d$ in $A(N)$  is inner. Otherwise such an $n$-covariant derivation $d$ is in general not inner. 

%%%%%%%%%%%%%%%%%%%%%%%%%%%%%%%%%%%%%%%%%%%%%%
We simultaneously state here without a detailed proof a similar classification of $n$-covariant derivations on Bunce-Deddens algebras. In fact, all of the arguments in the unilateral shift case of Theorem \ref{n_cov_der_formula} work the same (if not simpler) in the bilateral case needed for Bunce-Deddens algebras.

\begin{theo}\label{BDncov}
If $\delta$ is an $n$-covariant derivation in $B(N)$, then there exists  $\eta_n(\mathbb L)$ such that 
%\[\delta(a)=\begin{cases}
% [V^n\eta_n(\mathbb L), a] & \textnormal{ if } n\geq 0\\
% [\eta_n(\mathbb L)V^{n}, a] & \textnormal{ if } n< 0 . 
% \end{cases}\]
\begin{equation*}
\delta(a)=[V^n\eta_n(\mathbb L), a]
\end{equation*}
for every $a$ in $\mathcal B$.
If $N$ is infinite and $n \neq 0$ or $N$ is finite but $N \nmid n$ then: 
$$\eta_n(\mathbb L)\in B_{\textnormal{diag}}(N),$$ 
otherwise we have: 
$$\eta_n(\mathbb L+I)-\eta_n(\mathbb L)\in B_{\textnormal{diag}}(N).$$ 
Conversely, given any $\eta_n(\mathbb L)$ satisfying those properties, the formulas above define $n$-covariant derivations in $B(N)$.
\end{theo}

\subsection{Properties of covariant derivations}

In general, if an $n$-covariant derivation is approximately inner then it can also be approximated by inner $n$-covariant derivations.
\begin{prop}\label{CovAppProp}
Suppose $d$ is an $n$-covariant derivation in $A(N)$. If $d$ is approximately inner, then there exists a sequence of $n$-covariant derivations $\{d^M\}$ in $A(N)$ such that, for every $a \in \mathcal A(N)$ we have:
\[d(a)= \lim_{M \rightarrow \infty} d^M(a). \]
\end{prop}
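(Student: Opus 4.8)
The plan is to manufacture the approximating derivations by averaging the implementing elements of $d$ over the circle action $\rho^{\mathbb K}_{\theta}$, which amounts to projecting onto the $n$-th spectral component. For $c \in A(N)$ and $m \in \mathbb Z$ I would set
\[
\Phi_m(c) = \frac{1}{2\pi}\int_0^{2\pi} e^{-im\theta}\,\rho^{\mathbb K}_{\theta}(c)\,d\theta .
\]
This integral converges in norm because $\theta \mapsto \rho^{\mathbb K}_{\theta}(c)$ is norm-continuous, it is contractive ($\|\Phi_m(c)\| \le \|c\|$ since each $\rho^{\mathbb K}_{\theta}$ is isometric), and it lands in the spectral subspace $A_m(N)$ of Proposition \ref{n_spec_subsp}, i.e. $\rho^{\mathbb K}_{\theta}(\Phi_m(c)) = e^{im\theta}\Phi_m(c)$.

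Since $d$ is approximately inner, I fix $a_M \in A(N)$ with $d(a) = \lim_M [a_M, a]$ for all $a \in \mathcal A(N)$, and define $d^M(a) := [\Phi_n(a_M), a]$. Each $d^M$ is then an inner derivation implemented by $\Phi_n(a_M) \in A(N)$, and it is $n$-covariant: because $\Phi_n(a_M) \in A_n(N)$, applying $(\rho^{\mathbb K}_{\theta})^{-1}$ to $[\Phi_n(a_M), \rho^{\mathbb K}_{\theta}(a)]$ moves the factor $e^{-in\theta}$ out of the commutator, giving exactly the covariance relation. This already realizes the $d^M$ as inner $n$-covariant derivations, which is the intended strengthening of the statement.

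For convergence I would exploit the decisive simplification that for a polynomial $a \in \mathcal A(N)$ the orbit $\theta \mapsto \rho^{\mathbb K}_{\theta}(a) = \sum_k e^{ik\theta} a^{(k)}$ is a \emph{finite} trigonometric polynomial, whose coefficients $a^{(k)} = \Phi_k(a)$ are the finitely many nonzero homogeneous components of $a$ and again lie in $\mathcal A(N)$ (using the explicit form in Proposition \ref{curlA} and $\rho$-invariance of $\mathcal A(N)$). A short computation using $\rho^{\mathbb K}_{\theta}(a^{(k)}) = e^{ik\theta}a^{(k)}$ yields the identities
\[
d^M(a) = \sum_k \Phi_{n+k}\!\big([a_M, a^{(k)}]\big), \qquad d(a) = \sum_k \Phi_{n+k}\!\big(d(a^{(k)})\big),
\]
the second equality because $n$-covariance forces $d(a^{(k)}) \in A_{n+k}(N)$, so it is fixed by $\Phi_{n+k}$. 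Subtracting and invoking contractivity of $\Phi_{n+k}$ gives the finite bound
\[
\big\|d^M(a) - d(a)\big\| \le \sum_k \big\|[a_M, a^{(k)}] - d(a^{(k)})\big\|,
\]
and every summand tends to $0$ as $M \to \infty$ since each $a^{(k)} \in \mathcal A(N)$.

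The step I expect to demand the most care is passing from the averaged definition of $d^M$ to this finite-sum identity: naively one would worry about interchanging $\lim_{M}$ with the $\theta$-integral and about uniform control of $[a_M,\,\cdot\,] - d$ along the entire orbit of $a$. The observation that resolves this is precisely that the orbit of a polynomial is a finite trigonometric sum, so the averaging collapses to finitely many spectral projections and only pointwise convergence at the finitely many components $a^{(k)}$ is required — no uniformity or Cesàro summation is needed here. The ancillary facts (norm-continuity of the orbit, $\Phi_m(A(N)) \subseteq A_m(N)$, and $\|\Phi_m\|\le 1$) are routine and can be dispatched briefly.
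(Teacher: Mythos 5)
Your proposal is correct and takes essentially the same route as the paper: both construct $d^M$ by projecting the implementing elements onto the spectral subspace $A_n(N)$ via the averaging integral $\Phi_n$, and both deduce convergence from the contractivity of this projection combined with the covariance relations that place the relevant quantities in the right spectral subspaces. The only organizational difference is that the paper verifies convergence on the generators $U$, $U^*$, $a(\mathbb K)$ and extends by the Leibniz rule, whereas you verify it on an arbitrary polynomial by decomposing it into its finitely many homogeneous components $a^{(k)}$ — the underlying estimate is the same.
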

\begin{proof}
Given an element $a\in A(N)$, define its $\rho^{\mathbb K}_{\theta}$ $n$-th Fourier component by:
 \[(a)_n=  \frac 1{2\pi} \int_0^{2\pi} e^{-in\theta} \rho^{\mathbb K}_{\theta}(a) d\theta.\]
 If $d$ is approximately inner then there is a sequence $\{z^M\}$, with $z^M \in A(N)$ such that: 
 $$d(a)= \lim \limits_{M \rightarrow \infty} [a, z^M]$$ 
 for every $a \in \mathcal A(N)$. It can be easily checked that the Fourier component $(z^M)_n$ is in $A_{n}(N)$. So, it is sufficient to show that: 
 \[d(a)= \lim_{M \rightarrow \infty} [a, (z^M)_n]\]
 on the generators $U$, $ U^*$ and $a(\K)$ as the result then follows from Proposition \ref{n_spec_subsp}.
 Since 
 $$Uz^M- z^M U \rightarrow d(U),$$ 
 we equivalently have: 
 $$e^{-in \theta} (z^M -U^*z^M U) \rightarrow e^{-in\theta} U^* d(U).$$ 
 So, given $\epsilon >0$, there is an integer $m$ such that for every $M \geq n$, we can estimate:
 \[\|e^{-in\theta} (z^M -U^*z^M U - U^*d(U)) \| < \epsilon .\]
Since $\rho^{\mathbb K}_{\theta}(U^* d(U))= e^{in\theta} U^*d(U)$, the estimate can be written as:
\[\begin{aligned} \|e^{-in\theta} \rho^{\mathbb K}_{\theta}(z^M) - e^{-in\theta}U^*\rho^{\mathbb K}_{\theta}(z^M) U - U^*d(U) \| = \|e^{-in\theta} \rho^{\mathbb K}_{\theta}(z^M - U^*z^M U - U^*d(U)) \| < \epsilon.
  \end{aligned}\]
 Consequently, we have:
 \[\begin{aligned}&\| (z^M)_n - U^* (z^M)_nU - U^*d(U) \| \leq \\
 &\leq\frac 1{2\pi} \int_0^{2\pi} \|e^{-in\theta} \rho^{\mathbb K}_{\theta}(z^M) - e^{-in\theta}U^*\rho^{\mathbb K}_{\theta}(z^M) U - U^*d(U) \| d\theta < \epsilon,\end{aligned}\]
 thus proving  the convergence:
 $$d(U)= \lim_{M \rightarrow \infty} [U, (z^M)_n].$$ 
 A similar proof works for $d$ acting on $U^*$ and $a(\K)$, so the result follows.
\end{proof}

The explicit formulas of Theorem \ref{n_cov_der_formula} allow us to discuss when an $n$-covariant derivation is approximately inner. There are several cases to consider. When $N$ is infinite we separately consider the case when $n=0$, while when $N$ is finite, there are differences depending on whether $n$ is a multiple or $N$ or not.

First consider an invariant derivation $d$ in $A(N)$ given by
\[d(U)=U\alpha_0(\mathbb K), \;\;\; d(U^*)=-\alpha_0(\mathbb K) U^*,\;\;\; d(a(\mathbb K))=0.\]

\begin{lem}\label{inv_app_inn_c_0}
Suppose $d$ is an invariant derivation in $A(N)$ with $N$ infinite. If $\alpha_0(k) \in c_0$ then $d$ is approximately inner.
\end{lem}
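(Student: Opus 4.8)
The plan is to exhibit an explicit sequence of inner derivations converging to $d$ on generators. By Theorem \ref{n_cov_der_formula} (the $n=0$ case) we may write $d(a)=[\beta_0(\mathbb{K}),a]$, where $\alpha_0(\mathbb{K})=\beta_0(\mathbb{K}+I)-\beta_0(\mathbb{K})$ and $\beta_0(k)=\sum_{i=0}^{k-1}\alpha_0(i)$. The only obstruction to $d$ being genuinely inner is that $\beta_0(\mathbb{K})$ need not be bounded, since its entries are the partial sums of $\alpha_0$, which may diverge even when $\alpha_0\in c_0$. So the goal is to replace $\beta_0(\mathbb{K})$ by a sequence of bounded diagonal operators lying in $A(N)$ whose commutators still reproduce $d$ in the limit.

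The key idea is to truncate at the level of $\alpha_0$ rather than of $\beta_0$. Set $\alpha_0^M(k)=\alpha_0(k)$ for $k\le M$ and $\alpha_0^M(k)=0$ for $k>M$, so that $\alpha_0^M\in c_{00}$ and, crucially, $\|\alpha_0-\alpha_0^M\|_\infty=\sup_{k>M}|\alpha_0(k)|\to 0$ precisely because $\alpha_0\in c_0$. Define $\beta_0^M(k)=\sum_{i=0}^{k-1}\alpha_0^M(i)$. This sequence is constant, equal to $\sum_{i=0}^{M}\alpha_0(i)$, for all $k>M$, hence it is a $c_{00}$ perturbation of a constant sequence; by Proposition \ref{decomp_prop} it follows that $\beta_0^M(\mathbb{K})\in A_{\textnormal{diag}}(N)\subset A(N)$.

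It then remains to check convergence on the generators $U$, $U^*$ and $a(\mathbb{K})$. Using the commutation relation \eqref{the_com_rel} one computes $[\beta_0^M(\mathbb{K}),U]=U\alpha_0^M(\mathbb{K})$ and $[\beta_0^M(\mathbb{K}),U^*]=-\alpha_0^M(\mathbb{K})U^*$, while $[\beta_0^M(\mathbb{K}),a(\mathbb{K})]=0=d(a(\mathbb{K}))$ holds exactly. Since $U$ is an isometry, the operator norm of such a diagonal expression is controlled by the supremum norm of its symbol, so $\|[\beta_0^M(\mathbb{K}),U]-d(U)\|=\|\alpha_0^M-\alpha_0\|_\infty\to 0$, and likewise for $U^*$. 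Finally, because both $[\beta_0^M(\mathbb{K}),\cdot\,]$ and $d$ are derivations, the Leibniz rule expresses their difference on any fixed polynomial in the generators as a finite sum, each term of which carries a factor $[\beta_0^M(\mathbb{K}),g]-d(g)$ for a generator $g$; every such term tends to zero, giving $d(a)=\lim_M[\beta_0^M(\mathbb{K}),a]$ for every $a\in\mathcal A(N)$, which is exactly approximate innerness.

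The step I expect to require the most care is the choice of truncation. Truncating $\beta_0$ directly (setting its tail to zero beyond index $M$) fails, because the resulting commutator picks up a boundary term of size $|\beta_0(M)|$ at the cut-off, and this need not vanish. Truncating $\alpha_0$ instead sidesteps the difficulty, exploiting precisely the fact that $c_0$ is the uniform closure of $c_{00}$, so that antidifferentiating the truncated symbol produces bounded—indeed eventually constant—diagonal operators.
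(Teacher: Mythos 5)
Your proposal is correct and follows essentially the same route as the paper's proof: truncate $\alpha_0$ (not $\beta_0$) to get $\alpha_0^M\in c_{00}$, antidifferentiate to obtain eventually constant $\beta_0^M(\mathbb{K})\in A_{\textnormal{diag}}(N)$, and verify norm convergence of the inner derivations $[\beta_0^M(\mathbb{K}),\cdot\,]$ to $d$ on the generators $U$, $U^*$, $a(\mathbb{K})$. Your added remarks (invoking Proposition \ref{decomp_prop} for membership in $A_{\textnormal{diag}}(N)$, the Leibniz argument for passing from generators to all of $\mathcal A(N)$, and the explanation of why truncating $\beta_0$ directly fails) are sound refinements of the same argument.
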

\begin{proof} Like in \cite{KMR1} we define $\alpha_{0}^M (\mathbb K) \in A_{\textnormal{diag}}(N)$ by:
\[\alpha_{0}^M (k)= \begin{cases}
\alpha_{0}(k) &  \textnormal{ if } k\leq M\\
0 & \textnormal{otherwise}.
\end{cases}\]
Then we see that $ \alpha_{0}^M (\mathbb K) $ converges to $\alpha_{0}(\mathbb K)$ in norm as $M$ tends to infinity because:
\[\| \alpha_{0}(\mathbb K) - \alpha_{0}^M (\mathbb K) \| = \sup_{k} |\alpha_{0}(k) - \alpha_{0}^M (k)|= \sup_{k>M} |\alpha_{0}(k)| \xrightarrow{M \rightarrow \infty} 0.\]
The sequence $\beta_{0}^M (k)$ defined by:
\[\beta_{0}^M (k):= \sum_{j=0}^{k-1} \alpha_{0}^M (j)\]
is eventually constant and in particular, it is bounded. Therefore, we have that
$$d^M(a):=  [\beta_{0}^M(\mathbb K), a]$$
 is an inner derivation. To prove that $d^M(a) \rightarrow d(a)$ as $M \rightarrow \infty$ for every $a\in  \mathcal A(N)$, it is enough to check that on the generators $U$ and  $U^*$. But this follows easily since we have:
\[
d(U)- d^M(U)= U(\alpha_{0}(\mathbb K) - \alpha_{0}^M (\mathbb K)),\]
and similarly for $U^*$.
Thus $d$ is an approximately inner derivation. 
\end{proof}

The second case of invariant approximately inner derivations is described next.

\begin{lem}\label{inv_app_inn_per}
Suppose $d$ is an invariant derivation in $A(N)$ with an infinite supernatural number $N$. If $\alpha_0(k)=f(q(k))$ where $f \in C( \mathbb Z/N\mathbb Z)$ with $\int_ {\mathbb Z/N\mathbb Z} f(x) d_Hx=0$ and $q$ is the quotient map introduced in \eqref{q_def},  then $d$ is approximately inner.
\end{lem}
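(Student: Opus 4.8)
The plan is to follow the template of Lemma \ref{inv_app_inn_c_0}: I will produce a sequence of \emph{inner} derivations $d^M$, each implemented by a bounded diagonal operator $\beta_0^M(\mathbb K)$, and show they converge to $d$ on the generators $U$, $U^*$, and $a(\mathbb K)$. Recall from the $n=0$ case of Theorem \ref{n_cov_der_formula} that $d$ is determined by $d(U)=U\alpha_0(\mathbb K)$, $d(U^*)=-\alpha_0(\mathbb K)U^*$, $d(a(\mathbb K))=0$, and that formally $d(a)=[\beta_0(\mathbb K),a]$ with $\beta_0(k)=\sum_{i=0}^{k-1}\alpha_0(i)$. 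The difficulty is that for a merely continuous $f$ these partial sums need not stay bounded, so $\beta_0(\mathbb K)$ is in general unbounded and $d$ is not inner; the whole point is to replace $f$ by locally constant, zero-mean functions whose partial sums \emph{are} bounded, because they turn out to be periodic.

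First I would approximate. Since $\mathcal E(\mathbb Z/N\mathbb Z)$ is dense in $C(\mathbb Z/N\mathbb Z)$, choose locally constant $f_M\to f$ uniformly. These need not be mean-zero, so I correct them by setting
\[
g_M:=f_M-\int_{\mathbb Z/N\mathbb Z}f_M\,d_Hx .
\]
Each $g_M$ is locally constant with $\int_{\mathbb Z/N\mathbb Z}g_M\,d_Hx=0$, and since $\int f_M\,d_Hx\to\int f\,d_Hx=0$ we still have $g_M\to f$ uniformly. By Proposition \ref{loc_const} each $g_M$ is the pullback of a function on $\mathbb Z/j_M\mathbb Z$ for some $j_M\mid N$, so $\alpha_0^M(k):=g_M(q(k))$ is $j_M$-periodic.

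The key computation is that the partial-sum operator is now bounded. Because $\pi_{j_M}$ pushes forward the Haar measure of $\mathbb Z/N\mathbb Z$ to normalized counting measure on $\mathbb Z/j_M\mathbb Z$, and $\pi_{j_M}\circ q=q_{j_M}$, the values $\alpha_0^M(0),\dots,\alpha_0^M(j_M-1)$ run over all residues mod $j_M$ exactly once, whence
\[
\sum_{i=0}^{j_M-1}\alpha_0^M(i)=j_M\int_{\mathbb Z/N\mathbb Z}g_M\,d_Hx=0 .
\]
Therefore $\beta_0^M(k):=\sum_{i=0}^{k-1}\alpha_0^M(i)$ is itself $j_M$-periodic, so $\beta_0^M(\mathbb K)\in A_{\textrm{diag, per}}(N)$ is bounded and $d^M(a):=[\beta_0^M(\mathbb K),a]$ is a genuine inner derivation. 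Using the commutation relation \eqref{the_com_rel} one gets $d^M(U)=U\alpha_0^M(\mathbb K)$, $d^M(U^*)=-\alpha_0^M(\mathbb K)U^*$, and $d^M(a(\mathbb K))=0=d(a(\mathbb K))$, so that
\[
\|d(U)-d^M(U)\|=\|U(\alpha_0(\mathbb K)-\alpha_0^M(\mathbb K))\|\le\sup_k|f(q(k))-g_M(q(k))|\le\|f-g_M\|_\infty\to0 ,
\]
and similarly for $U^*$. Thus $d^M\to d$ on all generators, hence on $\mathcal A(N)$, and $d$ is approximately inner.

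I expect the only genuine obstacle to be the mean-correction step, namely arranging approximants that are simultaneously locally constant, mean-zero, and uniformly close to $f$. This is precisely what forces the hypothesis $\int_{\mathbb Z/N\mathbb Z}f\,d_Hx=0$: it is exactly this vanishing that makes each $\beta_0^M(\mathbb K)$ periodic (hence bounded) rather than linearly growing, and it is also the reason $d$ is in general only approximately inner and not inner, since the partial sums of the limiting $f$ need not be bounded.
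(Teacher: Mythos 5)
Your proof is correct and follows essentially the same route as the paper's: approximate $f$ uniformly by mean-zero locally constant functions, use the vanishing integral to make the partial sums $\beta_0^M(k)$ periodic (hence bounded), and check convergence of the resulting inner derivations $d^M(a)=[\beta_0^M(\mathbb K),a]$ on the generators $U$, $U^*$, $a(\mathbb K)$. Your justification of $\sum_{i=0}^{j_M-1}\alpha_0^M(i)=0$ via the pushforward of Haar measure to normalized counting measure on $\mathbb Z/j_M\mathbb Z$ is exactly the ``simple calculation'' the paper alludes to, so there is nothing to add.
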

\begin{proof}
If $f \in C( \mathbb Z/N\mathbb Z)$, then 
$$f(x)= \lim_{M \rightarrow \infty} f^M(x)$$ 
uniformly for some sequence of locally constant functions $f^M(x)$ on $ \mathbb Z/N\mathbb Z$. By Proposition \ref{loc_const} there is a sequence of numbers $\{j^M\}$,  such that $j^M \mid N$ and for every $M$ the sequence $f^M(q(k))$ is $j^M$-periodic.  Moreover, by subtracting constants: 
$$\int_{\mathbb Z/N\mathbb Z} f^M(x) d_Hx \xrightarrow{M \rightarrow \infty} 0,$$
if necessary, we can choose $f^M$ so that: 
\begin{equation}\label{zero_int}
\int_{\mathbb Z/N\mathbb Z} f^M(x) d_Hx=0.
\end{equation}

Now consider the sequence $\{\alpha_0^M(k)\}:=\{f^M(q(k))\} $. A simple calculation shows that the equation (\ref{zero_int}) is equivalent to the following condition:
$$\sum_{i=0}^{j^M-1}\alpha_0^M(i)=0.$$
Furthermore, defining
\[\beta_0^M(k)= \sum_{i=0}^{k-1}\alpha_0^M(i),\] 
we have that $\beta_0^M$ is  also $j^M$-periodic because:
\[\beta_0^M(k+j^M)= \sum_{i=0}^{k-1}\alpha_0^M(i) +  \sum_{i=k}^{k+j^M-1}\alpha_0^M(i)= \sum_{i=0}^{k-1}\alpha_0^M(i) + \sum_{i=0}^{j^M-1}\alpha_0^M(i)=\beta_0^M(k).\]

Let $d^M: \mathcal A(N) \rightarrow  A(N)$ be the derivation defined by:
\[d^M(U)=U\alpha_0^M(\mathbb K), \;\;\; d^M(U^*)=-\alpha_0^M(\mathbb K) U^*,\;\;\; d^M(a(\mathbb K))=0.\]
Thus, we have:
$$d^M(a)= [a, \beta_0^M(\mathbb K) ]$$ 
for every $a\in  \mathcal A(N)$ and, since $\beta_0^M(\mathbb K) \in A_{\textnormal{diag}}(N)$, it follows that $d^M$ is an inner derivation. Moreover, the sequence $\{d^M\}$ approximates $d$ because:
\[\|d(U)- d^M(U)\|= \sup_k |\alpha_0(k)- \alpha_0^M(k)|= \sup_k |f(q(k))- f^M(q(k))| \rightarrow 0\]  as $M \rightarrow \infty$. Similarly, we obtain that:
$$\lim_{M \rightarrow \infty} d^M(U^*)= d(U^*),$$ 
and therefore, $d$ is approximately inner.
\end{proof}

Now we consider the case of finite $N$ and $N \mid n$. Further examples of approximately inner $n$-covariant derivations are described by the following lemma.

\begin{lem}\label{n_cov_app_inn}
Suppose $d$ is an $n$-covariant derivation in $A(N)$ where $N$ is finite and $N \mid n$. Then $d$ is approximately inner if $\alpha_n(k) \in c_0$.
\end{lem}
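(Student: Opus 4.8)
The plan is to adapt the truncation argument of Lemma \ref{inv_app_inn_c_0} to the covariant setting, using the explicit commutator description from Theorem \ref{n_cov_der_formula}. Assume $n>0$ (the case $n<0$ is entirely symmetric, and $n=0$ is the invariant case handled exactly as in Lemma \ref{inv_app_inn_c_0}). By that theorem we may write $d(a)=[U^n\beta_n(\mathbb K),a]$, where $\alpha_n(\mathbb K)=\beta_n(\mathbb K)-\beta_n(\mathbb K-I)$ and one takes $\beta_n(k)=\sum_{i=0}^{k}\alpha_n(i)$. Following the earlier lemma, I would truncate by setting
\[
\alpha_n^M(k)=\begin{cases}\alpha_n(k)&\textnormal{if }k\le M\\0&\textnormal{otherwise}\end{cases}
\]
and then defining $\beta_n^M(k)=\sum_{i=0}^{k}\alpha_n^M(i)$. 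The candidate approximating derivations are $d^M(a):=[U^n\beta_n^M(\mathbb K),a]$.

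First I would check that each $d^M$ is inner. Since $\alpha_n\in c_0$, the partial sums $\beta_n^M(k)$ are eventually constant, so $\beta_n^M(k)$ is a bounded sequence differing from a constant by a $c_{00}$ term; hence $\beta_n^M(\mathbb K)\in A_{\textnormal{diag}}(N)$, $U^n\beta_n^M(\mathbb K)\in A(N)$, and $d^M=[U^n\beta_n^M(\mathbb K),\,\cdot\,]$ is an inner derivation in $A(N)$.

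Next I would verify $d^M(a)\to d(a)$ on the generators $U$, $U^*$ and $a(\mathbb K)$, from which the general case follows by the Leibniz rule as in the previous lemmas. On $U$ and $U^*$ one computes $d(U)-d^M(U)=U^{n+1}(\alpha_n(\mathbb K+I)-\alpha_n^M(\mathbb K+I))$, together with the analogous expression for $U^*$, and their norms equal $\sup_{k>M}|\alpha_n(k)|\to 0$ because $\alpha_n\in c_0$. The diagonal generators are where $N\mid n$ enters: for $a(\mathbb K)\in\mathcal A_{\textnormal{diag}}(N)$ the periodic part of $a$ has period dividing $N\mid n$ and is therefore annihilated by $a(\mathbb K)-a(\mathbb K+nI)$, while the $c_{00}$ part makes $a(\mathbb K)-a(\mathbb K+nI)$ a finite-rank diagonal operator supported on $\{0,\dots,K_0\}$ for some $K_0$. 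Since $d(a(\mathbb K))-d^M(a(\mathbb K))=U^n(\beta_n(\mathbb K)-\beta_n^M(\mathbb K))[a(\mathbb K)-a(\mathbb K+nI)]$ and $\beta_n(k)-\beta_n^M(k)=0$ for all $k\le M$, this difference vanishes identically once $M\ge K_0$.

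The main point to get right is the last step: unlike the infinite-$N$ lemmas, here $d$ acts nontrivially on diagonal elements, so one must also control $d^M(a(\mathbb K))-d(a(\mathbb K))$. The divisibility $N\mid n$ is precisely what collapses $a(\mathbb K)-a(\mathbb K+nI)$ to finite rank, converting the diagonal convergence into an eventual exact identity rather than a genuine limit and thereby sidestepping any delicate estimate on the (possibly unbounded) tails of $\beta_n$. Everything else reduces to the routine norm estimate on $U$ and $U^*$ and the standard passage from generators to all of $\mathcal A(N)$ via Leibniz.
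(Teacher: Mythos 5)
Your proposal is correct and follows essentially the same route as the paper, whose proof simply defers to the truncation argument of Lemma \ref{inv_app_inn_c_0}. Your additional verification on the diagonal generators --- where $N\mid n$ kills the periodic part of $a(\mathbb K)-a(\mathbb K+nI)$ and leaves a finite-rank diagonal piece, so that $d^M(a(\mathbb K))=d(a(\mathbb K))$ exactly for large $M$ --- is precisely the detail the paper leaves implicit in saying the proof is ``essentially the same.''
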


\begin{proof}
 The proof is essentially the same as that of Lemma \ref{inv_app_inn_c_0}.
\end{proof}

To complete the classification of $n$-covariant derivations we introduce special derivations $d_{n,\mathbb K}$ in $A(N)$ given by:
\[d_{n,\mathbb K}(a)=\begin{cases}
[U^n(\mathbb K +I), a] &  \textnormal{ if } n\geq 0\\
[(\mathbb K + I)(U^*)^n, a] &  \textnormal{ if } n<0.
\end{cases}\] 
Notice that by Theorem \ref{n_cov_der_formula}, derivations $d_{n,\mathbb K}(a)$ are well-defined in $A(N)$ when $N$ is infinite and $n = 0$ or when $N$ is finite and $N \mid n$, because we have the following relation for the diagonal operator coefficients: 
$$(\mathbb K+I)-\mathbb K=I\in A_{\textnormal{diag}}(N),$$
and there are no other restrictions on the coefficients for those cases.

\begin{theo}\label{ncov_N|n}
If $d$ is an $n$-covariant derivation in $A(N)$ where $N$ is infinite and $n = 0$ or when $N$ is finite and $N \mid n$, then there exists a unique constant $C_n$ such that 
\[d(a)= C_n d_{n,\mathbb K}(a) + \tilde d (a)\] for every $a\in \mathcal A(N)$, where $\tilde d$ is an approximately inner derivation.
\end{theo}

\begin{proof}
Consider the case $n>0$ and finite $N$. We then have the formula: $d(a)=[U^n\beta_n(\mathbb K), a]$, and the condition: $\alpha_n(\mathbb K) =\beta_n(\mathbb K +I) - \beta_n(\mathbb K) \in A_{\textnormal{diag}}(N)$.
We apply Proposition \ref{decomp_prop} to $ \alpha_n(k)$, and refine it in the following way:
\[\alpha_n(k)=\alpha_{n,0}(k) +C_n +  \alpha_{n, \textnormal{per}}(k)\]
where $C_n$ is a constant, $\alpha_{n,0}(k) \in c_0$, $  \alpha_{n, \textnormal{per}}(k+N)=\alpha_{n, \textnormal{per}}(k)$ and 
$$\sum_{k=0}^{N-1}  \alpha_{n, \textnormal{per}}(k) =0.$$ 
We then decompose $\beta_n$ using the following:
\[\beta_{n,0}(k):= \sum_{j=0}^{k}\alpha_{n,0}(j), \;\;\; \beta_{n, \textnormal{per}}(k):=  \sum_{j=0}^{k}\alpha_{n,\textnormal{per}}(j).\]
It is easy to verify that $ \beta_{n, \textnormal{per}}(k)$ is $N$-periodic, just as in Lemma \ref{inv_app_inn_per}. We then obtain:
\[\beta_n(k)=\beta_{n,0}(k) + C_n(k +1)+  \beta_{n, \textnormal{per}}(k).\]
So, for $n>0$, the derivation $d$ decomposes as follows:
\[d(a)= [U^n\beta_{n,0}(\mathbb K), a] + C_nd_{n,\mathbb K}(a) + [U^n  \beta_{n, \textnormal{per}}(\mathbb K), a].   \]

We know that $ [U^n\beta_{n,0}(\mathbb K), a]$ is approximately inner by Lemma \ref{n_cov_app_inn}. Moreover, since $ \beta_{n, \textnormal{per}}(\mathbb K) \in A(N)$,   $[U^n  \beta_{n, \textnormal{per}}(\mathbb K), a]$ is an inner derivation. 
To conclude the theorem for $n>0$, and verify the uniqueness, it only remains to show that $d_{n,\mathbb K}(a) $ is not approximately inner. This easily follows from the methods of Theorem 4.4 in \cite{KMR1}, in the following way.

Assume to the contrary that $d_{n,\mathbb K}$ is approximately inner. By Proposition \ref{CovAppProp} there exists a sequence $\mu^M(\K)\in A_{\textnormal{diag}}(N)$, $M=1,2,\ldots$, such that: 
\begin{equation*}
d_{n,\mathbb K}(a) = \lim_{M\to\infty} [U^n\mu^M(\K),a]
\end{equation*}
for all $a\in\mathcal{A}$. In particular, we must have:
\begin{equation*}
d_{n,\mathbb K}(U) = U^{n+1} = \lim_{M\to\infty} U^{n+1}(\mu^M(\K+I)-\mu^M(\K)).
\end{equation*}
Without loss of generality assume $\mu^M(k)$ are real, or else in the argument below simply consider the real part of $\mu^M(k)$.  
The above equation implies that:
\begin{equation*}
\lim_{M\to\infty}\underset{k}{\textrm{sup}}|(\mu^M(k+1)-\mu^M(k)) - 1)| = 0.
\end{equation*}
Therefore for any small $\varepsilon>0$ there are $k$ and $m$ large enough so that we have: 
\begin{equation*}
1-\varepsilon \le \mu^M(k+1) - \mu^M(k)\le 1+ \varepsilon.
\end{equation*}
By telescoping $\mu^M(k)$, we get:
\begin{equation*}
\mu^M(k) = (\mu^M(k) - \mu^M(k-1)) + \cdots + (\mu^M(k_0+1) - \mu^M(k_0)) + \mu^M(k_0)
\end{equation*}
for some fixed $k_0$.  Together the last two formulas imply that:
$$\mu^M(k)\ge (1-\varepsilon)(k-k_0) + \mu^M(k_0),$$
 which goes to infinity as $k$ goes to infinity.  This contradicts the fact that $\mu^M(\K)\in A_{\textnormal{diag}}(N)$ which completes the proof for $n>0$ and finite $N$.
Cases $n=0$ and $n<0$ can be proved very similarly. 

\end{proof}

We summarize the remaining cases of our classification of $n$-covariant derivations in $A(N)$ in the next theorem.

\begin{theo}\label{n_cov_inner}
Suppose $d$ is an $n$-covariant derivation in $A(N)$. If $N$ is infinite with $n \neq 0$ or if $N$ is finite with $N\nmid n$, then $d$ is an inner derivation.
\end{theo}

\begin{proof}
From Theorem \ref{n_cov_der_formula} we already know that $\beta_n(\mathbb K) \in A_{\textnormal{diag}}(N)$ when  $N$ is infinite with $n \neq 0$ or when $N$ is finite with $N\nmid n$. Thus $d$ is an inner derivation.
\end{proof}

This concludes the classification of $n$-covariant derivations in $A(N)$.  Classification  of $n$-covariant derivations in $B(N)$ is somewhat simpler and can be obtained by applying the same methods as used in the classification of $n$-covariant derivations in $A(N)$. 

\begin{theo}
If $\delta$ is a $n$-covariant derivation in $B(N)$ where $N$ is infinite and $n \neq 0$ or $N$ is finite but $N \nmid n$ then $\delta$ is an inner derivation. Otherwise there exists a unique constant $C_n$ such that 
\[\delta(a)= C_n [V^n\mathbb L, a] + \tilde \delta (a)\] 
for every $a\in \mathcal B(N)$, where if $N$ is finite and $N \mid n$, then $\tilde \delta$ is an inner derivation, and if $N$ is infinite and $n=0$ then $\tilde \delta$ is an approximately inner derivation.
\end{theo}

%%%%%%%%%%%%%%%%%%%%%%%%%%%%%%%%%%%%%%%%%%%%%%%%%%%%%%%%%%%%%%%%%%%%%%%%%
\section{General unbounded derivations}

This chapter contains our main results: the classification of derivations in $A(N)$ and $B(N)$. The structure of such derivations differs depending on whether $N$ is finite or infinite, and is interesting even for the simplest case of $N=1$, when $A(1)$ is the Toeplitz algebra. The main technique is the use of Fourier series with respect to the $S^1$ action $\rho^\mathbb{K}_\theta$ on $A(N)$, and $\rho^{\mathbb L}_\theta$ on $B(N)$. The Fourier coefficients of derivations are defined in the following way.

 \begin{defin}\label{Fou_comp}
If $d$ is a derivation in $A(N)$, the {\it $n$-th Fourier component} of $d$ is defined as: 
$$d_n(a)= \frac 1{2\pi} \int_0^{2\pi} e^{in\theta} (\rho^{\mathbb K}_{\theta})^{-1}d\rho^{\mathbb K}_{\theta}(a)\; d\theta.$$
 \end{defin}

\begin{defin}\label{Fou_comp1}
If $\delta$ is a derivation in $B(N)$, the {\it $n$-th Fourier component} of $\delta$ is defined as: 
$$\delta_n(b)= \frac 1{2\pi} \int_0^{2\pi} e^{in\theta} (\rho^{\mathbb L}_{\theta})^{-1}\delta\rho^{\mathbb L}_{\theta}(b)\; d\theta.$$
 \end{defin}

We have the following simple observation.

\begin{prop}
If $d$ is a derivation in $A(N)$, then $d_n$ is an $n$-covariant derivation and well-defined on $\mathcal A(N)$.
\end{prop}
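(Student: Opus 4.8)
The plan is to verify the two claimed assertions about $d_n$ directly from Definition \ref{Fou_comp}, namely that $d_n$ is a derivation, that it is $n$-covariant, and that it lands in (and is well-defined on) the algebraic domain $\mathcal A(N)$. The covariance and the Leibniz property are essentially formal consequences of the integral formula, while the well-definedness on $\mathcal A(N)$ is the only point requiring a small argument, since $d$ is only defined on the dense subalgebra $\mathcal A(N)$ rather than on all of $A(N)$.

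First I would check that $d_n$ satisfies the Leibniz rule. For each fixed $\theta$ the map $a \mapsto (\rho^{\mathbb K}_{\theta})^{-1} d \rho^{\mathbb K}_{\theta}(a)$ is a derivation on $\mathcal A(N)$: indeed $\rho^{\mathbb K}_{\theta}$ is an automorphism preserving $\mathcal A(N)$ by formula (\ref{rho_action}), $d$ is a derivation, and $(\rho^{\mathbb K}_{\theta})^{-1}$ is an automorphism, so the composite is a derivation because conjugating a derivation by an automorphism yields a derivation. Since the Leibniz identity $(\rho^{\mathbb K}_{\theta})^{-1} d \rho^{\mathbb K}_{\theta}(ab) = \left((\rho^{\mathbb K}_{\theta})^{-1} d \rho^{\mathbb K}_{\theta}(a)\right)b + a\left((\rho^{\mathbb K}_{\theta})^{-1} d \rho^{\mathbb K}_{\theta}(b)\right)$ holds pointwise in $\theta$, integrating against $\tfrac{1}{2\pi} e^{in\theta}\,d\theta$ preserves it, so $d_n$ is a derivation on $\mathcal A(N)$.

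Next I would establish $n$-covariance. The key is the group law $\rho^{\mathbb K}_{\theta} \circ \rho^{\mathbb K}_{\phi} = \rho^{\mathbb K}_{\theta + \phi}$, which follows from $e^{i\theta \mathbb K} e^{i\phi \mathbb K} = e^{i(\theta+\phi)\mathbb K}$. Applying $(\rho^{\mathbb K}_{\phi})^{-1} d_n \rho^{\mathbb K}_{\phi}$ to an element $a$ and substituting the integral definition, one gets $(\rho^{\mathbb K}_{\phi})^{-1} d_n \rho^{\mathbb K}_{\phi}(a) = \tfrac{1}{2\pi}\int_0^{2\pi} e^{in\theta}(\rho^{\mathbb K}_{\theta+\phi})^{-1} d\, \rho^{\mathbb K}_{\theta+\phi}(a)\,d\theta$. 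Changing variables $\psi = \theta + \phi$ and using $2\pi$-periodicity of the integrand produces the factor $e^{-in\phi}$, yielding exactly the covariance relation $(\rho^{\mathbb K}_{\phi})^{-1} d_n(\rho^{\mathbb K}_{\phi}(a)) = e^{-in\phi} d_n(a)$ demanded by the definition of an $n$-covariant derivation.

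The main obstacle, and the step deserving genuine attention, is showing $d_n$ is well-defined, i.e.\ that the integrand is integrable and $d_n(a) \in A(N)$ with $d_n$ defined on all of $\mathcal A(N)$. The issue is that $d(a)$ ranges over the possibly-unbounded-range target $A(N)$, so I must justify that $\theta \mapsto (\rho^{\mathbb K}_{\theta})^{-1} d \rho^{\mathbb K}_{\theta}(a)$ is a norm-continuous $A(N)$-valued function of $\theta$ for each fixed $a \in \mathcal A(N)$, which then guarantees the Bochner integral exists and lies in the closed subspace $A(N)$. Continuity reduces to checking it on generators: $\rho^{\mathbb K}_{\theta}$ acts on $U$, $U^*$, and $a(\mathbb K)$ by the explicit formulas (\ref{rho_action}), these depend norm-continuously on $\theta$, $d$ sends generators to fixed elements of $A(N)$, and $(\rho^{\mathbb K}_{\theta})^{-1}$ is an isometric automorphism depending continuously on $\theta$; assembling these via the Leibniz rule gives continuity on all of $\mathcal A(N)$. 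Since $\mathcal A(N)$ is preserved by $\rho^{\mathbb K}_{\theta}$ and the integral of a continuous $A(N)$-valued function stays in $A(N)$, the map $d_n \colon \mathcal A(N) \to A(N)$ is well-defined, completing the proof.
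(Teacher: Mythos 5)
Your proof is correct and follows essentially the same route as the paper: the covariance is established by exactly the same change-of-variables computation, while the Leibniz property and well-definedness are points the paper simply declares straightforward. Your extra detail (conjugation by automorphisms, and the norm-continuity of $\theta \mapsto (\rho^{\mathbb K}_{\theta})^{-1} d\rho^{\mathbb K}_{\theta}(a)$ guaranteeing the integral exists in $A(N)$) is a sound filling-in of those omitted steps.
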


\begin{proof}
It is straightforward to see that $d_n$ is a derivation and is well-defined on $\mathcal A(N)$.
The following computation verifies that $d_n$ is $n$-covariant:
\[\begin{aligned} (\rho^{\mathbb K}_{\theta})^{-1}d_n\rho^{\mathbb K}_{\theta}(a)&=\frac 1{2\pi} \int_0^{2\pi} e^{in\phi}  (\rho^{\mathbb K}_{\theta})^{-1}\rho_{\phi}^{-1}d \rho_{\phi}\rho^{\mathbb K}_{\theta}(a)\; d\phi\\
&= \frac 1{2\pi} \int_0^{2\pi} e^{in\phi}  \rho_{\theta + \phi}^{-1}d \rho_{\theta + \phi}(a)\; d\phi .
\end{aligned}\]
Changing to new variable $\theta + \phi$, and using the translation invariance of the measure, it now follows that $(\rho^{\mathbb K}_{\theta})^{-1}d_n\rho^{\mathbb K}_{\theta}(a)= e^{-in\theta} d_n(a)$.
\end{proof}

We have the following key Ces\`aro mean convergence result for Fourier components of $d$, which is more generally valid for unbounded derivations in any Banach algebra with the continuous circle action preserving the domain of the derivation.
\begin{lem} If $d$  is a derivation in $A(N)$ then:
\begin{equation}\label{Ces_eq}
d(a)=\lim_{M \rightarrow \infty } \frac 1{M+1} \sum_{j=0}^M \left(\sum_{n=-j}^j d_n(a)\right),
\end{equation}
for every $a\in \mathcal A(N)$.
\end{lem}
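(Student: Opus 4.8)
The plan is to recognize the right-hand side of \eqref{Ces_eq} as the Fej\'er (Ces\`aro) mean of the Fourier series of the $A(N)$-valued function
\[
F(\theta) := (\rho^{\mathbb K}_{\theta})^{-1} d\,\rho^{\mathbb K}_{\theta}(a),
\]
evaluated at $\theta = 0$, and then to invoke the classical Fej\'er summation theorem in the Banach-space-valued setting. First note that $F$ is $2\pi$-periodic, since $e^{2\pi i \mathbb K} = I$, and that $F(0) = d(a)$. With the convention $\hat F(m) = \frac{1}{2\pi}\int_0^{2\pi} e^{-im\theta} F(\theta)\,d\theta$, Definition \ref{Fou_comp} gives $d_n(a) = \hat F(-n)$, so the inner sum satisfies $\sum_{n=-j}^j d_n(a) = \sum_{m=-j}^j \hat F(m)$, which is exactly the value at $\theta = 0$ of the $j$-th symmetric partial sum of the Fourier series of $F$. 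Consequently the double average in \eqref{Ces_eq} is precisely the $M$-th Fej\'er mean $\sigma_M(F)(0)$, and the whole statement reduces to showing $\sigma_M(F)(0) \to F(0)$.

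The heart of the argument, and the step where the unboundedness of $d$ must be handled with care, is to prove that $\theta \mapsto F(\theta)$ is norm-continuous on $[0,2\pi]$. Here we use crucially that $a \in \mathcal A(N)$ is a polynomial. By Proposition \ref{curlA} together with Proposition \ref{n_spec_subsp} we may write $a = \sum_{n=-K}^K a_n$ as a \emph{finite} sum of spectral components $a_n \in A_n(N) \cap \mathcal A(N)$, so that $\rho^{\mathbb K}_{\theta}(a) = \sum_{n=-K}^K e^{in\theta} a_n$ and hence
\[
d\,\rho^{\mathbb K}_{\theta}(a) = \sum_{n=-K}^K e^{in\theta}\, d(a_n).
\]
This is a finite sum of the fixed elements $d(a_n) \in A(N)$ with smooth scalar coefficients; in particular $d$ is only ever applied to the finitely many fixed elements $a_n$, so its unboundedness plays no role and $\theta \mapsto d\,\rho^{\mathbb K}_{\theta}(a)$ is norm-continuous. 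It then remains to check that $\theta \mapsto (\rho^{\mathbb K}_{\theta})^{-1}(b)$ is norm-continuous for each fixed $b \in A(N)$; this is the point-norm continuity of the circle action, which I would obtain by approximating $b$ in norm by elements of $\mathcal A(N)$ (on which $\rho^{\mathbb K}_{\theta}$ acts as a trigonometric polynomial in $\theta$, hence continuously) and using that each $\rho^{\mathbb K}_{\theta}$ is isometric. Combining these facts shows $F$ is continuous.

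Finally I would invoke Fej\'er's theorem. Writing $\sigma_M(F)(0)$ as the convolution of $F$ with the Fej\'er kernel $K_M$, which is a nonnegative, even approximate identity with $\frac{1}{2\pi}\int_0^{2\pi} K_M = 1$ concentrating its mass at the origin, the standard estimate
\[
\bigl\| \sigma_M(F)(0) - F(0) \bigr\| \le \frac{1}{2\pi}\int_0^{2\pi} K_M(\phi)\, \| F(\phi) - F(0)\|\, d\phi
\]
tends to $0$ as $M \to \infty$ by uniform continuity of $F$ on the compact circle (split the integral near $\phi=0$, where $\|F(\phi)-F(0)\|$ is small, and away from it, where $K_M$ is small). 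The scalar proof of Fej\'er's theorem transfers verbatim to continuous Banach-space-valued functions, since only the norm inequalities above are used. This yields $\sigma_M(F)(0) \to F(0) = d(a)$, which is exactly \eqref{Ces_eq}. The only genuine obstacle is the continuity of $F$; once the polynomial structure of $a$ reduces $d\,\rho^{\mathbb K}_{\theta}(a)$ to a finite sum of fixed elements, the remainder is routine Fourier analysis and requires no control over the unbounded $d$.
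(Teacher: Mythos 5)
Your proposal is correct and follows essentially the same route as the paper: both identify the double average as the Fej\'er mean of $\theta \mapsto (\rho^{\mathbb K}_{\theta})^{-1} d\,\rho^{\mathbb K}_{\theta}(a)$ at $\theta=0$ and run the standard positive-kernel estimate, splitting the integral into a neighborhood of $\theta=0$ (mod $2\pi$), where uniform continuity applies, and its complement, where the kernel is $O(1/(M+1))$. The only difference is that you spell out why $F$ is norm-continuous (finite spectral decomposition of the polynomial $a$ plus point-norm continuity of the action), a point the paper asserts without proof, so your write-up is if anything more complete.
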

\begin{proof} 
We need to show that:
\begin{equation*}
\frac 1{M+1} \sum_{j=0}^M \left(\sum_{n=-j}^j d_n(a)-d(a)\right) \xrightarrow {M \rightarrow \infty }0 
\end{equation*}
for all $a\in  \mathcal A(N)$. Using the standard Fourier analysis \cite{K} we can write:
\[\frac 1{M+1} \sum_{j=0}^M \left(\sum_{n=-j}^j d_n(a)-d(a)\right)= \frac 1{2\pi} \int_0^{2\pi} F_M(\theta) \left((\rho^{\mathbb K}_{\theta})^{-1}d_n\rho^{\mathbb K}_{\theta}(a)- d(a)\right) d \theta,\]
where: 
$$F_M(\theta)= \frac 1{M+1} \left(\frac{\sin\left(\frac{M+1}2\right)\theta}{\sin\left(\frac{\theta}2\right)}\right)^2$$ 
is the Fej\'er kernel, which is manifestly positive and satisfies: 
$$\frac 1{2 \pi}\int_0^{2\pi} F_M(\theta) d\theta =1.$$ 
Since $(\rho^{\mathbb K}_{\theta})^{-1}d_n\rho^{\mathbb K}_{\theta}(a)- d(a)$ is continuous in $\theta$, given $\epsilon >0$ we can find small $\omega >0$ so that we have estimates:
\[\begin{aligned} \frac 1{2\pi} \int_0^{\omega} F_M(\theta) \|(\rho^{\mathbb K}_{\theta})^{-1}d_n\rho^{\mathbb K}_{\theta}(a)- d(a)\| d \theta &\leq \frac{\epsilon}{3} \;\;\textnormal{ and}\\
\frac 1{2\pi} \int_{2\pi- \omega}^{2\pi} F_M(\theta) \|(\rho^{\mathbb K}_{\theta})^{-1}d_n\rho^{\mathbb K}_{\theta}(a)- d(a)\| d \theta &\leq \frac{\epsilon}{3} .
\end{aligned}\]
Moreover, on the remaining interval we can estimate as follows:
\[\frac 1{2\pi} \int_{\omega}^{2\pi-\omega} F_M(\theta) \|(\rho^{\mathbb K}_{\theta})^{-1}d_n\rho^{\mathbb K}_{\theta}(a)- d(a)\| d \theta \leq \frac {\textrm{const}}{(M+1) \sin^2(\omega /2)}\] 
for some constant in the numerator. Consequently, we can choose $M$ large enough so that we get:
\[\left\|\frac 1{M+1} \sum_{j=0}^M \left(\sum_{n=-j}^j d_n(a)-d(a)\right)\right\| \leq \epsilon,\]
which completes the proof of \eqref{Ces_eq}.
\end{proof}

The first case we consider is a description of derivations for infinite $N$.

\begin{theo}\label{inf_N_class}
Suppose $d$  is a derivation in $A(N)$ with $N$ infinite. Then there exists a unique constant $C$ such that 
\[d(a)= C[\mathbb K, a] + \tilde d(a)\]
where $\tilde d$ is approximately inner.
\end{theo}
\begin{proof} 
Let $d_0$ be the $0$-th Fourier component of $d$. It is an invariant derivation, so by Theorem \ref{ncov_N|n} we have the unique decomposition:
\begin{equation*}
d_0(a)= C d_{0,\mathbb K}(a) + \tilde d_0 (a)= C[\mathbb K, a] + \tilde d_0(a),
\end{equation*}
for every $a\in \mathcal A(N)$, where $\tilde d_0$ is an approximately inner derivation.
From Theorem \ref{n_cov_inner} we have that the Fourier components $d_n$, $n \neq 0$ are inner derivations.   It follows from \eqref{Ces_eq}, by extracting $d_0$, that we have:
\begin{equation*}
d(a)=d_0(a)+\lim_{M\to\infty}\frac 1{M+1} \sum_{j=1}^M \left(\sum_{|n|\leq j,\, n\ne 0}d_n(a)\right).
\end{equation*}
The terms under the limit sign are all finite linear combinations of $n$-covariant derivations and so they are inner derivations themselves, meaning that the limit is approximately inner, which ends the proof.
\end{proof}

In exactly the same way we obtain the corresponding classification result for unbounded derivations in Bunce-Deddens algebras for infinite $N$.
\begin{theo}
Suppose $\delta$  is a derivation in $B(N)$ with $N$ infinite. Then there exists a unique constant $C$ such that 
\[\delta(b)= C[\mathbb L, b] + \tilde \delta(b)\]
where $\tilde \delta$ is approximately inner.
\end{theo}

We now turn to the classification of derivations in Bunce-Deddens and Bunce-Deddens-Toeplitz algebras for finite $N$. We start with the following simple observation.

\begin{lem}\label{center}
If $\delta:  \mathcal B(N) \rightarrow B(N)$ is a derivation, then $\delta(V^N) \in \textnormal{C}^*(V^N)$. 
\end{lem}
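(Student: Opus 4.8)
Since this lemma appears in the finite-$N$ part of the paper, my plan is to treat $N$ as an honest integer and to exploit the fact that for finite $N$ the element $V^N$ is \emph{central} in $B(N)$. First I would verify this centrality directly on generators: $V^N$ obviously commutes with $V$ (hence with $V^{-1}$), and for any $b(\mathbb{L})\in\mathcal B_{\textnormal{diag}}(N)$, iterating the commutation relation \eqref{the_com_rel2} gives $b(\mathbb{L})V^N = V^N b(\mathbb{L}+NI)$. Every $b\in\mathcal B_{\textnormal{diag}}(N)$ is $j$-periodic for some finite $j\mid N$ and is therefore $N$-periodic, so $b(\mathbb{L}+NI)=b(\mathbb{L})$ and $V^N$ commutes with $\mathcal B_{\textnormal{diag}}(N)$ as well. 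As $V,V^{-1}$ and $\mathcal B_{\textnormal{diag}}(N)$ generate $B(N)$, it follows that $V^N$ commutes with all of $B(N)$. Equivalently, under Propositions \ref{iden_2} and \ref{iden_3} the element $V^N$ corresponds to the factor $\textnormal{C}^*(V^N)\cong C(S^1)\otimes\mathbb{C}I$, which is exactly the center of $C(S^1)\otimes M_N(\mathbb C)$.

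The main step is then a short Leibniz computation. For any $a\in\mathcal B(N)$ centrality gives $V^N a = a V^N$; applying $\delta$ and using the Leibniz rule yields
\[\delta(V^N)\,a + V^N\delta(a) = \delta(a)\,V^N + a\,\delta(V^N).\]
Because $V^N$ is central and $\delta(a)\in B(N)$, we have $V^N\delta(a)=\delta(a)V^N$, so the two middle terms cancel and we are left with $[\delta(V^N),a]=0$ for every $a$ in the dense subalgebra $\mathcal B(N)$. Since multiplication in $B(N)$ is norm-continuous and $\delta(V^N)\in B(N)$, this commutation extends to all of $B(N)$, so $\delta(V^N)$ lies in the center of $B(N)$. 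By the identification above, the center is precisely $\textnormal{C}^*(V^N)$, which gives the claim.

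The only point requiring care — the would-be obstacle if one is not attentive — is that centrality is used twice: once to produce the relation $V^N a = aV^N$ whose $\delta$-image I compute, and once to cancel $V^N\delta(a)$ against $\delta(a)V^N$. The key observation that keeps the argument short is that the cross term $[V^N,\delta(a)]$, which would otherwise prevent isolating $\delta(V^N)$, vanishes automatically because $V^N$ is central (not merely because $a$ is). I would also confirm that finiteness of $N$ is genuinely used, so that $V^N$ is a bona fide operator and $j\mid N$ forces $N$-periodicity. As a cross-check, an alternative route via Theorem \ref{BDncov} — writing $\delta$ through its covariant Fourier components $\delta_n$, computing $\delta_n(V^N)=V^{n+N}\bigl(\eta_n(\mathbb{L}+NI)-\eta_n(\mathbb{L})\bigr)$, verifying this lies in $\textnormal{C}^*(V^N)$ in the cases $N\mid n$ and $N\nmid n$, and summing by Cesàro convergence — reaches the same conclusion but is considerably longer, so I would favor the direct centrality argument.
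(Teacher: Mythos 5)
Your proof is correct and takes essentially the same approach as the paper: the paper applies $\delta$ to the relations $V^N P_{sr} = P_{sr}V^N$ for the matrix units $P_{sr}$ of Proposition \ref{iden_3}, cancels the cross terms using centrality of $V^N$, and concludes that $\delta(V^N)$ lies in the commutant of the $P_{sr}$, which is $\textnormal{C}^*(V^N)$; you perform the identical Leibniz-plus-centrality cancellation, only tested against all of $\mathcal B(N)$ rather than just the finitely many $P_{sr}$. The only cosmetic difference is that you land in the full center of $B(N)$ and then identify it with $\textnormal{C}^*(V^N)$, whereas the paper stops at the relative commutant of the matrix units.
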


\begin{proof}
Applying $\delta$ to the relation $V^N P_{sr}= P_{sr}V^N$, we see that $\delta(V^N)$ commutes with $P_{sr}$ for every $r,s$ and so it must be in C$^*(V^N)$.
\end{proof}

By Propositions \ref{iden_2} and \ref{iden_3}, we know that for finite $N$ we have an isomorphism of C$^*$-algebras: $B(N) \cong C(S^1) \otimes  M_N(\mathbb C)$ and $\mathcal B(N)$ can be identified with  the set of $N$ by $N$ matrix-valued trigonometric polynomials $F(t)$ on $S^1$.  For any $f\in C(S^1)$ we define the following special derivation $\delta_f$ in $B(N)$:
\[\delta_f(F(t))= f(t) \frac 1i \frac d{dt} F(t).\]
Derivations $\delta_f$ are used in the following theorem which gives very concrete and explicit classification of derivations in $B(N)$.
\begin{theo}
Suppose $N$ is finite and $\delta$  is a derivation in $B(N)$. Then there exists a unique $f\in C(S^1)$ such that 
\[\delta = \delta_f + \tilde \delta\]
where $ \tilde \delta$ is inner.
\end{theo}

\begin{proof}
If $p(t)$ is a trigonometric polynomial and $A \in M_N(\mathbb C)$, then we have:
\[\delta(p(t)A) = \delta(p(t))A + p(t) \delta(A).\]
Here $C(S^1) \cong $ C$^*(V^N)$ and by Lemma \ref{center}, there is $f\in C(S^1)$ such that  $\delta(e^{it})= f(t) e^{it}$ and hence we have:
\[\delta(p(t))= f(t) \frac 1i \frac d{dt} p(t).\]
Moreover, given any derivation $\delta: M_N(\mathbb C) \rightarrow C(S^1,M_N(\mathbb C) )$, the following continuous  matrix-valued function $H(t) \in C(S^1,M_N(\mathbb C) )$ given by:
\[H(t)= \frac 1N \sum_{r,s=1}^N \delta(P_{rs})(t) P_{rs}\]
satisfies the easily verifiable relation:
$$\delta(A)(t)=[H(t), A].$$  
Consequently, we have:
\[\delta(p(t))= \left(f(t) \frac 1i \frac d{dt} p(t)\right)A + p(t)[H(t), A] =
\delta_f (p(t)A) + [H(t), p(t)A],\]
which completes the proof.
\end{proof}

It remains to classify derivations in $A(N)$ for finite $N$. For any $f\in C(S^1)$ we define a special derivation $d_f$ in $A(N)$ to be the unique derivation such that:
\begin{equation}\label{d_f_formulas}
d_f(a_{\textnormal{per}}(\mathbb K))=0, \;\; d_f(U)=\frac 1N UT(f(V^N)), \;\; d_f(U^*)=-\frac 1N T(f(V^N))U^*, 
\end{equation}
where $a_{\textnormal{per}}(\mathbb K)$ is any element of $A_{\textrm{diag, per}}(N)$.
Here $T(f(V^N))$ is the Toeplitz operator of formula \eqref{Toep_def}, where $f(V^N)$ is an operator in $\ell^2(\mathbb Z)$ defined by the functional calculus. 
The derivation $d_f$ is given on generators of $\mathcal A(N)$, hence, if it exists it is unique; to see that it is unambiguously defined on all of $\mathcal A(N)$ we need an additional argument.

Let $f(t)=\sum_{n\in\mathbb Z}f_ne^{int}$ be a trigonometric polynomial which we decompose as: 
$$f(t)=f^+(t)+f^-(t),$$
where $f^+(t)=\sum_{n\geq 0}f_ne^{int}$ and $f^-(t)$ has a similar formula, then we claim that we have the following formula for $d_f$:
\begin{equation}\label{df_formula}
d_f(a) = \frac{1}{N}\left[T(f^+(V^N))(\K+I) + (\K+I)T(f^-(V^N)), a\right].
\end{equation}
  To verify \eqref {d_f_formulas} we calculate using Lemma \ref{Toep_lemma}:
\begin{equation*}
Nd_f(a_{per}(\K)) = T\left(\left[f^+(V^N),a_{per}(\mathbb L)\right]\right)(\K+I) + (\K+I)T\left(\left[f^-(V^N),a_{per}(\mathbb L)\right]\right).
\end{equation*}
Since $a_{per}(\mathbb L)$ is $N$ periodic, it commutes with $V^N$, thus the above commutators are zero and hence $d_f(a_{per}(\K)=0$.

Next, notice that $UT(f^+(V^N)) = T(f^+(V^N))U$ since $f^+(V^N)$ only contains nonnegative powers of $V$.  Using this fact, the commutation relation \eqref{the_com_rel},
and Lemma \ref{Toep_lemma} we have
\begin{equation*}
\begin{aligned}
Nd_f(U) 
&=T(f^+(V^N))\left[(\K+I)U - U(\K+I)\right] + \left[(\K+I) - \K UU^*\right]T(f^-(V^N)V) \\
&=\left(T(f^+(V^N)) + T(f^-(V^N))\right)U = T(f(V^N))U.
\end{aligned}
\end{equation*}

For similar reasons as above we have $U^*T(f^-(V^N)) = T(f^-(V^N))U^*$.  Using this, the commutation relation \eqref{the_com_rel}, and again Lemma \ref{Toep_lemma}, we obtain the last part of formula \eqref{d_f_formulas}.
Thus this completes the proof of existence of $d_f$ for polynomial $f$.
It is clear from those formulas that $d_f$ is a well-defined derivation $\mathcal A(N) \rightarrow A(N)$. 

For a general $f\in C(S^1)$ we use an approximation argument to construct $d_f$. Namely if $\{f^M\}$ is a sequence of trigonometric polynomials converging uniformly to $f$ then, by formulas \eqref{d_f_formulas}, the sequence of derivations $\{d_{f^M}\}$ converges on generators of $\mathcal A(N)$, and hence it converges for every $a\in\mathcal A(N)$. The limit, which must be a derivation in $A(N)$, gives a construction of $d_f$.
Derivations $d_f$ are used in the theorem below.

Compared to the proof of Theorem \ref{inf_N_class}, the classification of derivations in $A(N)$ for finite $N$ gets more complicated since in this case a derivation may have infinitely many non-inner Fourier components. To handle those difficulties we need the following lemma which is more generally valid for unbounded derivations in any algebra if the domain is finitely generated.

\begin{lem}\label{app_der_lem}
If $N$ is finite, $d$  is a derivation in $A(N)$, and there is a sequence $\{d^M\}$ of approximately inner derivations such that for every $a\in \mathcal A(N)$:
$$d(a)=\lim_{M\to\infty}d^M(a),
$$
then $d$ is also approximately inner.
\end{lem}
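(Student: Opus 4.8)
The plan is to exploit the fact that, for finite $N$, the domain $\mathcal A(N)$ is \emph{finitely generated} as a $^*$-algebra, and then run a diagonal argument combining the approximate innerness of each $d^M$ with the pointwise convergence $d^M \to d$. First I would record that when $N$ is finite every $j$-periodic sequence with $j\mid N$ is automatically $N$-periodic, so $\mathcal A_{\textrm{diag, per}}(N)$ equals the $N$-dimensional span of the diagonal projections onto the residue classes modulo $N$. Consequently $\mathcal A(N)$ is generated by the finite set $\mathcal G$ consisting of $U$, $U^*$, and those finitely many periodic diagonal projections.

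The enabling observation is that a derivation in $A(N)$ is approximately inner as soon as it can be approximated by commutators on the finite generating set $\mathcal G$. Indeed, suppose $\{b_k\}\subset A(N)$ satisfies $\|[b_k,g]-d(g)\|\to 0$ for every $g\in\mathcal G$. Since the inner maps $[b_k,\cdot\,]$ and $d$ both obey the Leibniz rule, for any monomial one has
\begin{equation*}
[b_k,\,g_{i_1}\cdots g_{i_p}] = \sum_{s=1}^{p} g_{i_1}\cdots g_{i_{s-1}}\,[b_k,g_{i_s}]\,g_{i_{s+1}}\cdots g_{i_p},
\end{equation*}
and each summand converges to the corresponding term of $d(g_{i_1}\cdots g_{i_p})$ because $\|x([b_k,g]-d(g))y\|\le\|x\|\,\|[b_k,g]-d(g)\|\,\|y\|$ by submultiplicativity of the norm. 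By linearity the convergence $[b_k,a]\to d(a)$ then holds for every $a\in\mathcal A(N)$, so $d$ is approximately inner. Thus it suffices to produce a single sequence $\{c_m\}$ with $[c_m,g]\to d(g)$ for each of the finitely many $g\in\mathcal G$.

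I would then carry out the diagonal argument. Fix $m\in\mathbb N$. Since $d^M(g)\to d(g)$ for each $g\in\mathcal G$ and $\mathcal G$ is finite, choose $M_m$ so large that $\|d^{M_m}(g)-d(g)\|<1/(2m)$ for all $g\in\mathcal G$ simultaneously. As $d^{M_m}$ is approximately inner, its defining sequence of commutators converges to $d^{M_m}$ on each $g$; finiteness of $\mathcal G$ lets me select a single element $c_m\in A(N)$ from that sequence with $\|[c_m,g]-d^{M_m}(g)\|<1/(2m)$ for all $g\in\mathcal G$. The triangle inequality then gives $\|[c_m,g]-d(g)\|<1/m$ for every $g\in\mathcal G$, so $[c_m,g]\to d(g)$ as $m\to\infty$, and by the previous paragraph $\{c_m\}$ witnesses that $d$ is approximately inner.

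The only real subtlety, and the reason finiteness of $N$ is required, is the step in which a \emph{single} approximating element must control all generators at once; this works precisely because $\mathcal G$ is finite. For infinite $N$ the periodic diagonal algebra is infinite-dimensional and $\mathcal A(N)$ is no longer finitely generated, so this simultaneous-control step fails and the separate infinite-$N$ arguments given above are needed instead.
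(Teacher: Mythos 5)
Your proof is correct and follows essentially the same route as the paper's: exploit finite generation of $\mathcal A(N)$ for finite $N$, use a triangle-inequality/diagonal selection to extract a single sequence of commutators converging to $d$ on the generators, and invoke the Leibniz rule to propagate the convergence to all of $\mathcal A(N)$. Your write-up merely differs in inessential details — a slightly larger (still finite) generating set of diagonal projections instead of the paper's $\{U, U^*, e_N(\mathbb K)\}$, and an explicit monomial expansion for the Leibniz step that the paper leaves implicit.
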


\begin{proof} For finite $N$ the algebra $\mathcal A(N)$ is finitely generated; for example we can choose the following set of generators:
\begin{equation*}
G:=\{U, U^*, e_N(\K)\},
\end{equation*}
where the sequence $e_N(k)$ was defined in \eqref{eN_def}. Also, $d^M$ are approximately inner which means that there is a sequence $\{z^{M,W}\}$ of elements of $A(N)$ such that for every $a\in \mathcal A(N)$:
\begin{equation*}
d^M(a)=\lim_{W\to\infty}[z^{M,W},a].
\end{equation*}
For every positive integer $j$ we can choose $M_j$ such that for every $a\in G$ we have:
\begin{equation*}
\left\|d(a)-d^{M_j}(a)\right\|\leq \frac{1}{2j},
\end{equation*}
which can be done because the generating set $G$ is finite. Then choose $W_j$ such that for every $a\in G$ we have:
\begin{equation*}
\left\|d^{M_j}(a)-[z^{M_j,W_j},a]\right\|\leq \frac{1}{2j}.
\end{equation*}
By the triangle inequality we obtain:
\begin{equation*}
\left\|d(a)-[z^{M_j,W_j},a]\right\|\leq \frac{1}{j},
\end{equation*}
which means that we have:
\begin{equation*}
d(a)=\lim_{j\to\infty}[z^{M_j,W_j},a]
\end{equation*}
for every $a\in G$, which by the Leibniz identity implies the above convergence for every $a\in \mathcal A(N)$. Consequently, $d$ is approximately inner, finishing the proof.
\end{proof}

With this preparation we are now ready to state our classification result for derivations in $A(N)$ with finite $N$.
\begin{theo}
Suppose $N$ is finite and $d$  is a derivation in $A(N)$. Then there exists unique function $f\in C(S^1)$ such that: 
\[d = d_f + \tilde d,\]
where $ \tilde d$ is approximately inner and $d_f$ is defined by formula \eqref{d_f_formulas}.
\end{theo}

\begin{proof}
Consider the derivation $[d]:  \mathcal B(N) \rightarrow B(N)$ given by: 
$$[d](a+\mathcal K)= d(a)+\mathcal K.$$ 
It is easy to see from Definitions \ref{Fou_comp} and \ref{Fou_comp1} that we have the following equality for the Fourier components:
$$[d]_n= [d_n].$$

To construct the function $f$ in the statement of the theorem we notice that Lemma \ref{center} states that $[d](V^N)\in \textnormal{C}^*(V^N)$ and hence we can write: 
$$[d](V^N)=f(V^N)V^N$$ 
for some $f\in C(S^1)$. It follows that we have:
\begin{equation}\label{d_n_one}
[d]_n(V^N)=\begin{cases}
f_jV^{jN+N} &  \textnormal{ if } n=jN\\
0 &  \textnormal{otherwise,}
\end{cases}
\end{equation}
where $f_j$ are the Fourier coefficients of $f$. 

On the other hand, from Theorem \ref{n_cov_der_formula} we know that 
\[d_n(U^N)=[U^n \beta_n(\mathbb K), U^N]= U^{n+N}\left(\alpha_n(\mathbb K+(N-1)I) + \cdots + \alpha_n(\mathbb K)\right). \]

Next, for $N\mid b$, we decompose $\alpha_n(k)$ as in the proof of Theorem \ref{ncov_N|n}: 
\[\alpha_n(k)= \alpha_{n,0}(k)+C_n + \alpha_{n, \textnormal{per}}(k),\]
where $C_n$ is a constant, $\alpha_{n,0}(k) \in c_0$, $  \alpha_{n, \textnormal{per}}(k+N)=\alpha_{n, \textnormal{per}}(k)$ and 
$$\sum_{k=0}^{N-1}  \alpha_{n, \textnormal{per}}(k) =0.$$
This decomposition is also valid for $N\nmid n$ but with $C_n=0$ by Theorem \ref{n_cov_inner}.
It follows that we have: 
\[d_n(U^N)=U^{n+N}\left(\alpha_{n,0} (\mathbb K +(N-1)I)+ \cdots + \alpha_{n,0}(\mathbb K) + NC_n\right),\]
and consequently, we obtain:
\begin{equation}\label{d_n_two}
[d_n](V^N)= NC_nV^{n+N} = \begin{cases}
NC_{jN}V^{jN+N}  &  \textnormal{ if } n=jN\\
0 &  \textnormal{otherwise.}
\end{cases}
\end{equation}
Comparing equation \eqref{d_n_one} and \eqref{d_n_one} implies the following formulas for constants $C_n$:
\begin{equation*}
C_n=\begin{cases}
\frac{1}{N}f_j &  \textnormal{ if } n=jN\\
0 &  \textnormal{otherwise.}
\end{cases}
\end{equation*}
It then follows from the formula \eqref{df_formula} that we have:
\begin{equation}\label{df_fourier}
(d_f)_n(a)= \frac{1}{N}f_j d_{n,\mathbb K}(a)=\begin{cases}
[U^n C_n (\mathbb K +I), a] &  \textnormal{ if } n \geq 0\\
[C_n (\mathbb K +I) (U^*)^{-n}, a] &  \textnormal{ if } n <0.
\end{cases}
\end{equation}

As in the proof of Theorem \ref{ncov_N|n} we decompose $\beta_n$ using:
\[\beta_{n,0}(k):= \sum_{j=0}^{k}\alpha_{n,0}(j), \;\;\; \beta_{n, \textnormal{per}}(k):=  \sum_{j=0}^{k}\alpha_{n,\textnormal{per}}(j).\]
This gives the following formulas for the Fourier components of the difference between $d$ and $d_f$:
\[\tilde d_n(a):=(d-d_f)_n(a)= \begin{cases}
[U^n \left(\beta_{n,0}(\mathbb K) + \beta_{n, \textnormal{per}}(\mathbb K)\right), a] &  \textnormal{ if } n \geq 0\\
[\left(\beta_{n,0}(\mathbb K) + \beta_{n, \textnormal{per}}(\mathbb K)\right) (U^*)^{-n}, a] &  \textnormal{ if } n <0.
\end{cases}\]
From Theorem \ref{n_cov_inner} we know that $\tilde d_n$ is an inner derivation if $N\nmid n$. If we denote by $(\tilde d_0)_n$ and  $(\tilde d_{\textnormal{per}})_n$ the following derivations on $ \mathcal A(N)$:
\[(\tilde d_0)_n(a)=  \begin{cases}
[U^n\beta_{n,0}(\mathbb K) , a] &  \textnormal{ if } n \geq 0\\
[\beta_{n,0}(\mathbb K)  (U^*)^{-n}, a] &  \textnormal{ if } n <0,
\end{cases}\] 
\[(\tilde d_{\textnormal{per}})_n(a)= \begin{cases}
[U^n \beta_{n, \textnormal{per}}(\mathbb K) , a] &  \textnormal{ if } n \geq 0\\
[\beta_{n, \textnormal{per}}(\mathbb K) (U^*)^{-n}, a] &  \textnormal{ if } n <0
\end{cases}
\]
then, when $N\mid n$, we know that $(\tilde d_{\textnormal{per}})_n$ is inner while  $(\tilde d_0)_n$ is approximately inner from Theorem \ref{ncov_N|n}. 
To conclude that $\tilde d$ is approximately inner we first use formula \eqref{Ces_eq} which says, in view of the above discussion, that $\tilde d$ is a limit of approximately inner derivations. Consequently, using Lemma \ref{app_der_lem}, we see that $\tilde d$ is approximately inner.

To show the uniqueness of this decomposition, it is sufficient to prove that $d_f$ is approximately inner if and only if $f=0$. If $f=0$, it is clear that $d_f$ is approximately inner.  To prove the converse statement, notice that if $d_f$ is approximately inner then so are the Fourier components $(d_f)_n$, which by formula \eqref{df_fourier} are proportional to derivations  $d_{n,\mathbb K}$, which in turn were proved in Theorem \ref{ncov_N|n} not to be approximately inner. This gives a contradiction and finishes the proof of the theorem.
\end{proof}

\section{Implementations}

The purpose of this section is to investigate implementations of unbounded derivations in Bunce-Deddens algebras $B(N)$ as operators in Hilbert spaces. 
This study is inspired by the following noncommutative geometry concept of first order elliptic operator with respect to a C$^*$-algebra: an unbounded operator $D$ acting in a Hilbert space $\mathcal H$ which carries a representation $\pi$ of a C$^*$-algebra $A$ is called a first order elliptic operator with respect to $A$ if it satisfies two properties:
\begin{enumerate}
\item $[D,\pi(a)]$ is bounded for all $a$ in some dense $^*$-subalgebra $\mathcal A$ of $A$.
\item $D$ has a compact parametrices, which by the appendix of \cite{KMR1} is equivalent to the two operators $(I + D^*D)^{-1/2}$ and $(I + DD^*)^{-1/2}$ being compact operators.
\end{enumerate}
Such first a order elliptic operator with respect to $A$ is a key component of the notion of a spectral triple in noncommutative geometry; see \cite{CPR}.
 
If a first order elliptic operator $D$ is an implementation of a densely-defined unbounded derivation in $A$ then the first condition of the above definition is automatically satisfied. Hence we are mainly interested in establishing when implementations of derivations in $B(N)$ have compact parametrices. We only consider here the representations of $B(N)$ in Hilbert spaces obtained from the GNS construction as those are the most geometrical representations of those algebras.

In \cite{KMR1} and \cite{KMR2} implementations of 0-covariant, that is invariant, and 1-covariant derivations in the quantum disk (Toeplitz algebra) and the quantum annulus were studied to see if it was possible to construct spectral triples on those quantum domains.  Here we continue this analysis for $n$-covariant derivations in $B(N)$.

A state $\tau: B(N) \to \mathbb C$ is called a $\rho^{\mathbb L}_{\theta}$-{\it invariant state} on $B(N)$ if for all $a\in B(N)$ it satisfies the following:
\[\tau(\rho^{\mathbb L}_{\theta}(a))= \tau(a).\]  
It is not difficult to describe the $\rho^{\mathbb L}_{\theta}$-invariant states on $B(N)$. 
To do this we use the identification $B(N) \cong C(\mathbb Z/N\mathbb Z) \rtimes_{\sigma} \mathbb Z$; see Proposition \ref{cross_iden}.
There is a natural expectation $E: B(N) \rightarrow C(\mathbb Z/N\mathbb Z)$, a positive, linear map such that $E^2=E$. For an element 
\[b= \sum\limits_{n \in \mathbb Z} V^n b_n(x) \in \mathcal B(N),\]  
see \eqref{pol_In_B(N)}, it is given by: 
$$E(b)= \frac{1}{2\pi}\int_0^{2\pi}\rho^{\mathbb L}_{\theta}(b)\,d\theta= b_0(x) \in  C(\mathbb Z/N\mathbb Z).$$
Since $C(\mathbb Z/N\mathbb Z)$ is the fixed point algebra for $\rho^{\mathbb L}_{\theta}$, we immediately obtain the following observation:
if $\tau : B(N)\to\mathbb C$ is a $\rho^{\mathbb L}_{\theta}-$invariant state on $B(N)$ then there exists a state $t : C(\mathbb Z/N\mathbb Z)\to\mathbb C$ such that: 
$$\tau(b) = t(E(b)).$$  
Conversely given a state $t : C(\mathbb Z/N\mathbb Z)\to\mathbb C$, then $\tau(b) = t(E(b))$ defines a $\rho_\theta-$invariant state on $B(N)$.
Therefore the invariant states are given by probabilistic measures on $\mathbb Z/N\mathbb Z$.

We will concentrate below on the following two most interesting and natural $\rho^{\mathbb L}_{\theta}$-invariant states on $B(N)$, namely $\tau_0$ and $\tau_{\textnormal{Haar}}$ defined by: 
\[\tau_0(b)= E(b)(0)\quad\textrm{ and }\quad \tau_{\textnormal{Haar}}(b)= \int_{\mathbb Z/N\mathbb Z}E(b)(x)\ d_Hx, \]
where $d_Hx$ is the unique normalized Haar measure. Denote by $\mathcal{H}_0$ and  $\mathcal{H}_{\textnormal{Haar}}$ the GNS Hilbert spaces corresponding to $\tau_0$ and $\tau_{\textnormal{Haar}}$ respectively and let $\pi_0, \pi_{\textnormal{Haar}}$ be the corresponding representations.
We have the following concrete description of those Hilbert spaces and representations.

\begin{prop}\label{gns_hilbert_spaces}
The GNS Hilbert spaces $\mathcal{H}_0$ and  $\mathcal{H}_{\textnormal{Haar}}$ are naturally isomorphic to the following:
\begin{equation*}
\mathcal{H}_0 \cong \ell^2(\mathbb Z)\quad\textrm{ and }\quad \mathcal{H}_{\textrm{Haar}} \cong L^2(\mathbb Z\times\mathbb{Z}/N\mathbb{Z}).
\end{equation*}

The representation $\pi_0: B(N) \rightarrow \mathcal B(\ell^2(\mathbb Z))$ is the defining representation of $B(N)$, i.e. $\pi_0(a)=a$ for all $a\in B(N)$.

The representation $\pi_{\textnormal{Haar}}: B(N) \rightarrow \mathcal B(L^2(\mathbb Z\times\mathbb{Z}/N\mathbb{Z}))$ is completely described by:
\begin{equation*}
\begin{aligned}
&1.\ \pi_{\textrm{Haar}}(V)f(m,x)= f(m-1,x) \\
&2.\ \pi_{\textrm{Haar}}(a(q(\mathbb L)))f(m,x)=a(x+m)f(m,x),
\end{aligned}
\end{equation*} 
where $f(m,x)\in L^2(\mathbb Z\times\mathbb{Z}/N\mathbb{Z})$ and $a(x)\in C(\mathbb Z/N\mathbb Z)$.
\end{prop}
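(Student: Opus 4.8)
The plan is to rely on the uniqueness (up to unitary equivalence) of the GNS triple: for each invariant state I will exhibit a concrete cyclic representation together with a cyclic vector that reproduces the state, and then invoke uniqueness to identify it with the abstract GNS data. Throughout I use that $\mathcal B(N)$ is spanned by the monomials $V^n c(q(\mathbb L))$ with $c\in C(\mathbb Z/N\mathbb Z)$, the commutation relation \eqref{the_com_rel2} in the iterated form $c(q(\mathbb L))V^n = V^n c(q(\mathbb L+nI))$, and the description of the conditional expectation $E$ as the projection onto the $V^0$-Fourier mode. I write the GNS inner product as $\langle a,b\rangle=\tau(b^*a)$, so that left multiplication is a $*$-representation.

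For $\tau_0$ I take the defining representation of $B(N)\cong\textnormal{C}^*(V,\mathcal B_{\textnormal{diag}}(N))$ on $\ell^2(\mathbb Z)$ from Proposition \ref{iden_2}, with candidate cyclic vector $E_0$. A one-line computation on the monomials gives $\langle \pi_0(V^n c(q(\mathbb L)))E_0,E_0\rangle = c(q(0))\,\delta_{n,0}=\tau_0(V^n c(q(\mathbb L)))$, so this vector state agrees with $\tau_0$ on the dense subalgebra $\mathcal B(N)$ and hence everywhere. Since $V^n E_0=E_n$, the vector $E_0$ is cyclic. By uniqueness of the GNS construction, $\mathcal H_0\cong\ell^2(\mathbb Z)$ and $\pi_0$ is the defining representation, which establishes the first identification and the stated form of $\pi_0$.

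For $\tau_{\textnormal{Haar}}$ I compute the GNS inner product directly on monomials. Using $V^*=V^{-1}$ and \eqref{the_com_rel2}, I reduce $(V^n d(q(\mathbb L)))^* V^m c(q(\mathbb L))$ to $V^{m-n}$ times a diagonal operator; applying $E$ annihilates every term except $m=n$ and leaves $\langle V^m c(q(\mathbb L)),V^n d(q(\mathbb L))\rangle=\delta_{m,n}\int_{\mathbb Z/N\mathbb Z}c(x)\overline{d(x)}\,d_Hx$. This is exactly the inner product of $\delta_m\otimes c$ and $\delta_n\otimes d$ in $\ell^2(\mathbb Z)\otimes L^2(\mathbb Z/N\mathbb Z)=L^2(\mathbb Z\times\mathbb Z/N\mathbb Z)$. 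Because $d_Hx$ has full support, a general element $\sum_m V^m b_m(q(\mathbb L))$ has GNS norm $\big(\sum_m\|b_m\|_{L^2}^2\big)^{1/2}$, which vanishes only when every $b_m=0$; thus the GNS null space is trivial on $\mathcal B(N)$, and $V^m c(q(\mathbb L))\mapsto\delta_m\otimes c$ extends, using density of $C(\mathbb Z/N\mathbb Z)$ in $L^2(\mathbb Z/N\mathbb Z)$, to a unitary onto $L^2(\mathbb Z\times\mathbb Z/N\mathbb Z)$.

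Finally I transport the GNS action, which is left multiplication by $B(N)$, through this unitary and evaluate on the generators. Left multiplication by $V$ sends $V^m c(q(\mathbb L))$ to $V^{m+1}c(q(\mathbb L))$, that is $\delta_m\otimes c\mapsto\delta_{m+1}\otimes c$, which is the shift $f(m,x)\mapsto f(m-1,x)$ of formula 1; left multiplication by $a(q(\mathbb L))$ followed by one use of \eqref{the_com_rel2} moves the diagonal factor past $V^m$ and yields $V^m a(q(\mathbb L+mI))c(q(\mathbb L))$, that is multiplication by $a(x+m)$, which is formula 2. The main obstacle is bookkeeping rather than anything conceptual: keeping the direction of every adjoint and commutator straight in the $\tau_{\textnormal{Haar}}$ inner-product reduction, and confirming that the operators defined only on the dense span extend boundedly. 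The latter is automatic here, since $\pi_{\textnormal{Haar}}$ is a genuine GNS representation and the generators $V$ and $a(q(\mathbb L))$ act by a unitary shift and a bounded multiplication operator, respectively.
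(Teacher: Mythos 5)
Your proposal is correct. For $\tau_{\textrm{Haar}}$ it follows the same route as the paper: compute the GNS inner product on the monomials $V^m c(q(\mathbb L))$, use orthogonality in the power of $V$ together with the full support of the Haar measure to see that the null space is trivial on $\mathcal B(N)$, identify the completion with $L^2(\mathbb Z\times\mathbb Z/N\mathbb Z)$, and transport the left multiplication to get the two formulas --- in fact you supply the inner-product reduction and the density/extension details that the paper compresses into ``calculations with $\tau_{\textrm{Haar}}$ are very similar.'' For $\tau_0$ your packaging is genuinely different: the paper computes $\tau_0(b^*b)=\sum_n |b_n(0)|^2$ explicitly, reads off the null space, builds the isomorphism $[b]\mapsto \{b_n(0)\}$ by hand, and then verifies the action formulas, whereas you exhibit the defining representation on $\ell^2(\mathbb Z)$ with cyclic vector $E_0$, check that its vector state agrees with $\tau_0$ on monomials, and invoke uniqueness of the GNS triple. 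The uniqueness route is cleaner, since the intertwining of the representations comes for free; the paper's explicit quotient map has the mild advantage of recording concretely which vector each class $[b]$ corresponds to (for instance $[I]\leftrightarrow E_0$, and $[b](m,x)=b_m(x)$ in the Haar case), identifications that are reused in the implementation theorems later in the section --- though your unitary $V^m c(q(\mathbb L))\mapsto \delta_m\otimes c$ encodes exactly the same data. Both arguments ultimately rest on the same elementary computations with monomials and the commutation relation \eqref{the_com_rel2}.
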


\begin{proof}
To properly identify the Hilbert space 
\[
\mathcal{H}_0= \overline{B(N)/\{b\in B(N): \tau_0(b^*b)=0\}} 
\]
we must study $\tau_0(b^*b)=0$ for $b\in B(N)$.  In fact, due to the continuity of $\tau_0$, we only need to work on the dense subalgebra $\mathcal{B}(N)$.  For any $b\in\mathcal{B}(N)$ given by equation \eqref{pol_In_B(N)}, a straightforward calculation yields:
\begin{equation*}
\tau_0(b^*b) = \sum_{n\in\mathbb Z}|b_{n}(0)|^2.
\end{equation*}
Therefore, if $\tau_0(b^*b) = 0$, it follows that $b_{n}(0)=0$ for all $n$.   Then the formula:
$$
\mathcal{H}_0\ni[b]\mapsto \{b_n(0)\}\in\ell^2(\mathbb{Z})
$$
gives an isomorphism $\mathcal{H}_0 \cong \ell^2(\mathbb{Z})$, similar to the proof of Proposition $5.4$ in \cite{KMR1}. Notice that the class $[I]$ in the completion of the quotient $\overline{B(N)/\{b\in B(N): \tau_0(b^*b)=0\}}$ corresponds to the basis element $E_0$ in $ \ell^2(\mathbb{Z})$.
From the formula:
\begin{equation*}
Vb = \sum_{n\in\mathbb Z}V^nb_{n-1}(\mathbb{L}),
\end{equation*}
it follows that we have:
$$\pi_0(V)[b]= \{b_{n-1}(0)\}_{n\in\mathbb{Z}}.$$ 
An analogous calculation shows: 
$$\pi_0(a(\mathbb L))[b]=\{a(n)b_{n}(0)\}_{n\in\mathbb{Z}}$$ 
for $a(\mathbb L)\in B_{\textnormal{diag}}(N)$.  This proves the first part of the proposition. 

In the second example we have $\tau_{\textrm{Haar}}(b^*b)=0$ if and only if $b=0$. If $b\in\mathcal B$ is given by:
$$b=\sum_{n \in \mathbb Z} V^n b_{n} (\mathbb L)=
\sum_{n \in \mathbb Z} V^n f_{n} (q(\mathbb L))
$$
then the corresponding function in $L^2(\mathbb Z\times\mathbb{Z}/N\mathbb{Z})$ is given by:
$$[b](m,x)=f_m(x).
$$
Otherwise calculations with
$\tau_{\textrm{Haar}}$ are very similar.
\end{proof}

We remark here briefly that because $B(N)$ is defined as the quotient of $A(N)$ with the ideal of compact operators, an invariant state on $B(N)$ lifts to an invariant state on $A(N)$. The corresponding GNS Hilbert spaces are the same as for  $B(N)$, with compact operators represented trivially.

In general, for a GNS Hilbert space $\mathcal{H}_\tau$  of $B(N)$ with respect to a state $\tau$ we have that $B(N)\subseteq \mathcal{H}_\tau$ is dense in $\mathcal{H}_\tau$ and $[I]\in \mathcal{H}_\tau$ is cyclic. 
Consequently,  the subspace
$$\mathcal{D}_\tau := \pi_\tau(\mathcal{B}(N))\cdot[I]$$
 is dense in $\mathcal{H}_\tau$.  Define $V_{\tau ,\theta} : \mathcal{H}_\tau\to \mathcal{H}_\tau$ via the equation:
 $$V_{\tau ,\theta}[b] = [\rho^\mathbb{L}_\theta(b)].$$  
Notice that for every $\theta$, the operator $V_{\tau ,\theta}$ extends to a unitary operator in $\mathcal{H}_\tau$.   Moreover by a direct calculation we get:
\begin{equation*}
V_{\tau ,\theta}\pi_\tau(b)V_{\tau ,\theta}^{-1} = \pi_\tau(\rho^\mathbb{L}_\theta(b)),
\end{equation*}
meaning that $V_{\tau ,\theta}$ is an implementation of $\rho^\mathbb{L}_\theta$.
It follows from the definitions that we have the following inclusions:
$$V_{\tau ,\theta}(\mathcal{D}_\tau)\subseteq\mathcal{D}_\tau \textrm{ and }\pi_\tau(\mathcal{B}(N))(\mathcal{D}_\tau)\subseteq\mathcal{D}_\tau.$$

Let $\delta$ be an $n$-covariant derivation in $B(N)$ and let $\tau$ be a $\rho^{\mathbb{L}}_\theta-$invariant state. Implementations of $\delta$ in the GNS Hilbert space $\mathcal H_{\tau}$ are defined in the following way.
\begin{defin}
An operator $D_\tau :\mathcal{D}_\tau \to \mathcal{H}_\tau$ is called a {\it covariant implementation} of an $n$-covariant derivation $\delta$ if 
$$[D_\tau, \pi_\tau(b)] = \pi_\tau(\delta(b))$$ and 
$$V_{\tau,\theta} D_\tau V_{\tau,\theta}^{-1} = e^{in\theta} D_\tau.$$
\end{defin}

Below we find all covariant implementations of $n$-covariant derivations on the two GNS Hilbert spaces $\mathcal{H}_0$ and $\mathcal{H}_{\textrm{Haar}}$ of Proposition \ref{gns_hilbert_spaces}, and establish when they have compact parametrices. We start by recapping Theorem \ref{BDncov} with additional details needed for the formulation of the implementation results. 

Any $n$-covariant derivation $\delta$ in $B(N)$ is of the form:
$$\delta(b)=[V^n\eta_n(\mathbb L),b],
$$
where for $N$ infinite, $n\ne 0$ and $N$ finite and $N\nmid n$ the operator $\eta_n(\mathbb L)$ is in $B_{\textnormal{diag}}(N)$; hence it comes from a function $h_n$ in $C(\mathbb Z/N\mathbb Z)$, so that we have:
$$\eta_n(\mathbb L)=h_n(q(\mathbb L)).$$
In other cases the increment 
$$\gamma_n(\mathbb L):=\eta_n(\mathbb L)-\eta_n(\mathbb L-I)$$ 
is in $B_{\textnormal{diag}}(N)$, so it can be written as: 
$$\gamma_n(\mathbb L)=g_{n}(q(\mathbb L))$$ 
for some $g_{n}(x)\in C(\mathbb Z/N\mathbb Z)$. It follows that there is a constant $C_n$ such that we have decompositions:
$$g_n(x)=C_n+\tilde g_n(x) \textrm{ and } \eta_n(\mathbb L)=C_n\mathbb L +\tilde\eta_n(\mathbb L),
$$
where the function $\tilde{g}_{n}(x)\in C(\mathbb Z/N\mathbb Z)$  satisfies the property:
\begin{equation*}
\int_{\mathbb{Z}/N\mathbb{Z}}\tilde{g}_{n}(x)\ d_H x = 0,
\end{equation*}
and we have:
$$
\tilde g_n(q(\mathbb L))=\tilde\eta_n(\mathbb L)-\tilde\eta_n(\mathbb L-I).
$$
When $N$ is infinite and $n=0$, in general, it is possible for $\tilde\eta_0(l)$ to be unbounded. However, when $N$ is finite and $N\mid n$ then $\tilde\eta_n(l)$ must be in the finite dimensional vector space $C(\mathbb Z/N\mathbb Z)$, and so we have:
$$\tilde\eta_n(\mathbb L)=\tilde h_{n}(q(\mathbb L))$$ 
for some $\tilde h_{n}(x)\in C(\mathbb Z/N\mathbb Z)$. All of this notation is used in the following implementation statements.

\begin{theo}
Any covariant implementation $D_{\tau_0} :\mathcal{D}_{\tau_0} \to \ell^2(\mathbb{Z})$ of an $n$-covariant derivation $\delta$ in $B(N)$ is of the form:
\begin{equation*}
D_{\tau_0} =\left\{
\begin{aligned} 
&V^n\eta_n(\mathbb{L}) &&\textrm{for }n\ne0\\
&\eta_0(\mathbb{L}) + c\cdot I &&\textrm{for }n=0,\\
\end{aligned}\right.
\end{equation*}
with arbitrary constant $c$ for $n=0$.
If $N$ is infinite and $n\neq0$ or if $N$ is finite and $N\nmid n$, then the operator $D_{\tau_0}$ is bounded, so it does not have compact parametrices. In all other cases, $\eta_n(l)\to\infty$ as $l\to\infty$ is a necessary and sufficient condition for $D_{\tau_0}$ to have compact parametrices.
\end{theo}

\begin{proof}
It is easy to see that $\mathcal{D}_{\tau_0}$ coincides with $c_{00}\subseteq \ell^2(\mathbb{Z})$. The formulas for $D_{\tau_0}$ follow from simple calculations, just like in \cite{KMR1}. See also the next theorem for more details of similar calculations in the Haar measure state case. 

From the appendix of \cite{KMR1}, $D_{\tau_0}$ has compact parametrices if and only if $(I + D_{\tau_0}^*D_{\tau_0})^{-1/2}$ and $(I + D_{\tau_0}D_{\tau_0}^*)^{-1/2}$ are compact operators.  A direct calculation yields the following formula:
\begin{equation*}
I + D_{\tau_0}^*D_{\tau_0} =\left\{
\begin{aligned}
&I + |\eta_n|^2(\mathbb{L}) &&\textrm{for }n\neq0 \\
&(1+c^2)\cdot I + 2\textrm{Re }\eta_0(\mathbb{L}) + |\eta_0|^2(\mathbb{L}) &&\textrm{for }n=0, \\  
\end{aligned}\right.
\end{equation*}
which is a diagonal operator for all $n$.  Therefore, it follows that $(I + D_{\tau_0}^*D_{\tau_0})^{-1/2}$ is compact if and only if $\eta_n(l)$ goes to infinity, in particular when $C_n\ne 0$. An analogous computation works for $(I + D_{\tau_0}D_{\tau_0}^*)^{-1/2}$, thus completing the proof.
\end{proof}

Similar analysis can also be performed for implementations of $n$-covariant derivations in the GNS Hilbert space corresponding to the invariant state on $B(N)$ determined by the Haar measure on $\mathbb Z/N\mathbb Z$.

\begin{theo}
There exists a function $\psi(x)\in L^2(\mathbb{Z}/N\mathbb{Z},d_Hx)$  such that any implementation $D_{\tau_{\textrm{Haar}}} : \mathcal{D}_{\tau_{\textrm{Haar}}} \to  L^2(\mathbb Z\times\mathbb{Z}/N\mathbb{Z})$ of $\delta$ is of the form: 
\begin{equation*}
\left(D_{\tau_{\textrm{Haar}}}f\right)(m,x) = h_{n}(x+m-n)f(m-n,x) + (\psi(x) - h_{n}(x))f(m-n,x+n), 
\end{equation*}
if $N$ is infinite, $n\neq0$, or if $N$ is finite, $N\nmid n$, and
\begin{equation*}
\left(D_{\tau_{\textrm{Haar}}}f\right)(m,x) = \left(C_0 m+ (\tilde{g}_{0}(x+m-1)+\cdots+\tilde{g}_{0}(x)) + \psi(x)\right)f(m,x), 
\end{equation*}
if $N$ is infinite, $n=0$, or 
\begin{equation*}
\left(D_{\tau_{\textrm{Haar}}}f\right)(m,x) = \left(C_n\cdot(m-n)+\tilde h_{n}(x+m) - \tilde h_{n}(x) +\psi(x) \right)f(m-n,x), 
\end{equation*}
if $N$ is finite, $N\mid n$.

For $N$ finite and $N\mid n$ a necessary and sufficient condition for $D_{\tau_{\textrm{Haar}}}$ to have compact parametrices is $C_n\neq0$.  In all other cases $D_{\tau_{\textrm{Haar}}}$ does not have compact parametrices.
\end{theo}

\begin{proof}
First notice that $[I] = \chi_0(m,x)$ where $\chi_0(m,x) =1 $ when $m=0$ and zero for all other values of $m$.  Given $b\in\mathcal{B}(N)$ we compute as follows:
\begin{equation*}
\begin{aligned}
D_{\tau_{\textrm{Haar}}}[b] &= D_{\tau_{\textrm{Haar}}}\pi_{\textrm{Haar}}(b)[I] = [D_{\tau_{\textrm{Haar}}},\pi_{\textrm{Haar}}(b)][I] + \pi_{\textrm{Haar}}(b)D_{\tau_{\textrm{Haar}}}[I] \\
&=\pi_{\textrm{Haar}}(\delta(b))[I] + \pi_{\textrm{Haar}}(b)D_{\tau_{\textrm{Haar}}}[I].
\end{aligned} 
\end{equation*}
Applying the covariance condition $V_{\tau_{\textrm{Haar}},\theta}D_{\tau_{\textrm{Haar}}}V_{\tau_{\textrm{Haar}},\theta}^{-1} = e^{in\theta}D_{\tau_{\textrm{Haar}}}$ to $[I] = \chi_0(m,x)$ shows that there exists a function $\psi(x)\in L^2(\mathbb{Z}/N\mathbb{Z},d_Hx)$ such that 
$$D_{\tau_{\textrm{Haar}}}\chi_0(m,x) = \psi(x)(\pi_{\textrm{Haar}}(V^n)\chi_0)(m,x).$$  
It follows that we have the formula:
\begin{equation*}
\pi_{\textrm{Haar}}(b)D_{\tau_{\textrm{Haar}}}[I](m,x) = \psi(x)(\pi_{\textrm{Haar}}(bV^n)\chi_0)(m,x)=\psi(x)[b](m-n, x+n),
\end{equation*}
because of the following calculation with Fourier components of $b$:
$$bV^n=\sum_{m \in \mathbb Z} V^{m+n} f_{m} (q(\mathbb L)+n\cdot I)=\sum_{m \in \mathbb Z} V^{m} f_{m-n} (q(\mathbb L)+n\cdot I).
$$
This implies the following general expression of the operator $D_{\tau_{\textrm{Haar}}}$: 
\begin{equation*}
\begin{aligned}
(D_{\tau_{\textrm{Haar}}}[b])(m,x) &=
\left[\sum_{m\in\mathbb Z}V^{m}\eta_{n}(\mathbb{L} + (m-n)\cdot I)b_{m-n}(\mathbb{L})  - \eta_{n}(\mathbb{L})b_{m-n}(\mathbb{L}+n\cdot I)\right](m,x)\\
&+\psi(x)[b](m-n, x+n).
\end{aligned}
\end{equation*}

If $N$ is infinite  and $n\neq 0$ or if $N$ is finite and $N\nmid n$ then $\eta_{n}(\mathbb{L})$ is in $B_{\textrm{diag}}(N)$ hence   it comes from a function $h_n(x)$ in $C(\mathbb Z/N\mathbb Z)$. Consequently, we have the formula:
\begin{equation*}
\left(D_{\tau_{\textrm{Haar}}}[b]\right)(m,x) = h_{n}(x+m-n)[b](m,x) + (\psi(x) - h_{n}(x))[b](m,x+n). 
\end{equation*}
The first and last terms of the above expression (those containing $h_n(x)$) are bounded operators and hence $D_{\tau_{\textrm{Haar}}}$ has compact parametrices if and only if the middle term has compact parametrices by the results in the appendix of \cite{KMR1}. That term is unitarily equivalent to the operator:
$$f(m,x)\mapsto \psi(x)f(m,x),
$$
which for every $m$ is the multiplication operator by an $L^2$-function in $L^2(\mathbb Z/N\mathbb Z,d_Hx)$ and
therefore $D_{\tau_{\textrm{Haar}}}$ can not have compact parametrices. 

In the case when $N$ is infinite and $n=0$, there is in general no function $h_0(x)$ such that $\eta_0(\mathbb L)=h_0(q(\mathbb L))$, and so we write the difference $\eta_0(\mathbb L+m\cdot I) - \eta_0(\mathbb L)$ in terms of $\eta_n(\mathbb L)-\eta_n(\mathbb L-I)=\gamma_n(\mathbb L)=g_0(q(\mathbb L))=C_n\cdot I+\tilde g_0(q(\mathbb L))$ to obtain the following expression:
\begin{equation*}
\left(D_{\tau_{\textrm{Haar}}}[b]\right)(m,x) = \left(C_0 m+ (\tilde{g}_{0}(x+m-1)+\cdots+\tilde{g}_{0}(x)) + \psi(x)\right)[b](m,x). 
\end{equation*}
As in the first case, since for each fixed $m$ the above formula is a diagonal operator that is a multiplication by a $L^2$-function, it is therefore impossible for $D_{\tau_{\textrm{Haar}}}$ to have compact parametrices.  

Finally in the last case, when $N$ is finite and $N\mid n$, the Hilbert space $L^2(\mathbb Z/N\mathbb Z,d_Hx)$ is now a finite dimensional Hilbert space.   Hence, we can decompose $\eta_n(\mathbb L)$ as follows: 
$$\eta_n(\mathbb L)=C_n\mathbb L+ \tilde g_n(q(\mathbb L)).$$ 
Using $N$-periodicity in $x$ we arrive at the expression:
\begin{equation*}
\left(D_{\tau_{\textrm{Haar}}}[b]\right)(m,x) = \left(C_n\cdot(m-n)+\tilde h_{n}(x+m) - \tilde h_{n}(x) +\psi(x) \right)[b](m-n,x).
\end{equation*}
Notice that since $\tilde h_{n}(x)$ is $N$-periodic then $\tilde h_{n}(x+m)$ is uniformly bounded in $m$ and $x$.
It now follows that $D_{\tau_{\textrm{Haar}}}$ have compact parametrices if and only if $C_n\neq0$.  This completes the proof.
\end{proof}

\end{document}